\numberwithin{equation}{section}
\newcommand\xappa\kappa
\newcommand\yota\iota
\newcounter{consta}
\newcounter{constb}
\newcounter{constc}[section]
\DeclareFontFamily{OT1}{rsfs}{}
\DeclareFontShape{OT1}{rsfs}{n}{it}{<-> rsfs10}{}
\DeclareMathAlphabet{\mathscr}{OT1}{rsfs}{n}{it}
\newtheorem{thm}{Theorem}[section]
\newtheorem{lem}[thm]{Lemma}
\newtheorem{prop}[thm]{Proposition}
\newtheorem*{lem*}{Lemma}
\newtheorem*{thm*}{Theorem}
\newtheorem*{conj*}{Conjecture}
\newtheorem*{prop*}{Proposition}
\newtheorem{defn*}{Definition}
\newtheorem{cor}[thm]{Corollary}
\theoremstyle{definition}
\newtheorem{defn}[thm]{Definition}
\theoremstyle{remark}
\newtheorem{rem}[thm]{Remark}
\newtheorem{obs}[thm]{Observation}
\newtheorem{question}[thm]{Question}
\newtheorem*{obs*}{Observation}
\newtheorem*{rem*}{Remark}
\theoremstyle{definition}\newtheorem*{acknowledgments}{Acknowledgments}
\begin{document}
\title[On mixing and sparse ergodic theorems]{On mixing and sparse ergodic theorems}
\author{Asaf Katz}
\address{Einstein Institute of Mathematics, The Hebrew University of Jerusalem, Jerusalem, 91904, Israel.}
\curraddr{Depaterment of Matheamtics, University of Michigan, Ann Arbor, MI, 48109, USA.}
\email{asaf.katz@gmail.com}

%\thanks{The research was supported by ERC grant (AdG Grant 267259).}
\date{}
%\pagenumbering{gobble}

\begin{abstract} We consider Bourgain's ergodic theorem regarding arithmetic averages in the cases where quantitative mixing is present in the dynamical system.
Focusing on the case of the horocyclic flow, those estimates allow us to bound from above the Hausdorff dimension of the exceptional set, providing evidence towards conjectures by Margulis, Shah and Sarnak regarding equidistribution of arithmetic averages in homogeneous spaces. We also prove the existence of a uniform upper bound for the Hausdorff dimension of the exceptional set which is independent of the spectral gap.
\end{abstract}
\maketitle

\section{Introduction}
In a seminal paper \cite{bourgain1989pointwise}, J. Bourgain proved a pointwise ergodic theorem for arithmetic averages, solving an open problem due to H. Furstenberg. We refer the interested reader to Bourgain's exposition in \cite{bourgain1988approach} and the comprehensive survey article~\cite{rosenblatt1995pointwise}.

In this paper, we study Bourgain's theorem for dynamical systems for which quantitative mixing estimates hold and in particular in the context of unipotent flows on homogeneous spaces.

It has been conjectured by N. Shah~\cite{shah1994limit}, in the case of the horocyclic flow on homogeneous spaces of $SL_{2}(\mathbb{R})$, and in a general form by G. Margulis~\cite{margulis2000problems} and in another form by P. Sarnak~\cite{sarnak2010mobius}, that those arithmetic averages should converge for \emph{every individual point}, for every homogeneous space $G/\Gamma$ where $\Gamma$ is a lattice in $G$. 

The analogous settings for \emph{continuous time} flows is addressed by equidistribution theorems for unipotent flows such as the Dani-Smillie theorem (\cite[Theorem~$1$]{dani1984}, Ratner's equidistribution theorem, Shah's equidistribution theorem (\cite[Theorem $1.1$, Corollary $1.1$]{shah1994limit}) and various effective improvements of those theorems.

We provide evidence towards those conjectures in the form of limiting the Hausdorff dimension of the exceptional set.
The main ingredients in our proof are \emph{quantitative mixing estimates} and the \emph{polynomial rate of divergence for the horocyclic flow}.
This approach is different than the approach which has been used towards those conjectures by A. Venkatesh \cite{venkatesh2010sparse}, which is based upon proving a ''large level of distribution`` in a suitable formulation of the quantitative pointwise ergodic theorem.

The analogous situation for nilflows has been proven by Leibman in \cite{leibman2005pointwise} and in quantitative form by Green-Tao in \cite{green2012quantitative}.

\subsection*{Statement of the results}

Our main result is:
\begin{thm}\label{thm:hdim-estimate-absolute}
Let $G$ be a simple Lie group, $U$ a one-parameter unipotent subgroup $U=\{u_{t}\}_{t\in \mathbb{R}} \leq G$ and $p(x)$ be a non-constant polynomial with integer coefficients. There exists a constant $\sigma=\sigma(G,p)>0$ such that for any lattice $\Gamma \leq G$, when considering the $U$-flow on the homogeneous space $X=G/\Gamma$, the following estimate holds 
\begin{equation*}
    dim_{H}\left\{x\in X \mid \left\{u_{p(n)}.x\right\}_{n\in \mathbb{N}} \text{ does not equidistribute} \right\} \leq dim(G)-\sigma,
\end{equation*}
where $\dim_{H}$ stands for Hausdorff dimension.
\end{thm}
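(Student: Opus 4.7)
The plan is to combine quantitative mixing for the $U$-action on $X = G/\Gamma$ with the polynomial divergence of the unipotent subgroup $U$, executed through a Bourgain-style dyadic covering argument, to produce not only almost-everywhere convergence but a quantitative bound on the Hausdorff dimension of the exceptional set. The starting point is an $L^2$ variance estimate for a smooth mean-zero test function $f$: expanding the square of $A_N f(x) = \frac{1}{N}\sum_{n=1}^N f(u_{p(n)}x)$ gives
\begin{equation*}
\|A_N f\|_{L^2}^2 = \frac{1}{N^2}\sum_{n,m=1}^N \langle \pi(u_{p(n)-p(m)})f, f\rangle,
\end{equation*}
where $\pi$ is the regular representation on $L^2(X)$. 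Quantitative decay of matrix coefficients of the form $|\langle \pi(u_t) f, f\rangle| \leq C\|f\|_{\mathcal{S}}^2 (1+|t|)^{-\gamma}$, combined with the polynomial lower bound $|p(n)-p(m)| \gtrsim |n-m|\max(n,m)^{\deg(p)-1}$ on the off-diagonal, yields a bound of the shape $\|A_N f\|_{L^2}^2 \ll \|f\|_{\mathcal{S}}^2 N^{-\delta}$ with $\delta = \delta(G,p) > 0$.

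From this $L^2$ estimate I would first deduce pointwise convergence of $A_{N_k} f$ along the dyadic subsequence $N_k = 2^k$ via a Chebyshev/Borel--Cantelli argument, picking up the quantitative measure bound $\mu\{x : |A_{N_k} f(x)| > \epsilon\} \ll \epsilon^{-2} 2^{-k\delta}$; interpolation to arbitrary $N$ between consecutive $N_k$ is then handled by a short maximal inequality using smoothness of $f$ and the polynomial growth of $p$, or alternatively by a direct oscillation argument in the spirit of Bourgain. To promote this measure-theoretic statement to a Hausdorff dimension estimate, I would invoke polynomial divergence of the unipotent flow: for $x,y$ in a fixed compact set with $d(x,y) < r$ one has $d(u_t x, u_t y) \ll r(1+|t|)^D$ for some $D = D(G,U)$. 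Choosing a scale $r_N$ comparable to $N^{-D\deg(p)-\eta}$ for small $\eta > 0$, the function $A_N f$ varies by at most $\epsilon/2$ on any ball of radius $r_N$. Hence $\{|A_N f| > \epsilon\}$ is a union of such balls, and the $L^2$ bound forces its $r_N$-covering number to be at most $r_N^{-\dim G + \sigma_0}$ for some $\sigma_0 > 0$. Taking $\limsup$ along $N_k$ and summing Hausdorff contents yields $\dim_H \leq \dim(G) - \sigma$ for some $\sigma = \sigma(G,p) > 0$.

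The main obstacle is threefold. First, the constant $\sigma$ must be independent of the lattice $\Gamma$, which forces using a mixing estimate whose exponent $\gamma$ is independent of the spectral gap; this is available when $G$ is simple via the classical tensor product trick applied to $f \otimes \bar f$ in the regular representation, replacing Howe--Moore decay (which has no quantitative rate in general) by an effective rate depending only on $G$. Second, one must delicately calibrate $r_N$ against $N$ so that the polynomial divergence loss (worsening with $N$) does not swallow the mixing gain $N^{-\delta}$; balancing these exponents is what ultimately pins down $\sigma$. Third, when $\Gamma$ is non-uniform, one needs to control cusp excursions of $U$-orbits, either via Dani-type non-divergence to localize the argument on a compact exhaustion of $X$, or by arguing directly that points whose orbits spend a positive proportion of time deep in the cusp form an auxiliary exceptional set whose dimension is already strictly less than $\dim(G)$.
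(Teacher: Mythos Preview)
Your outline matches the paper's approach in its \emph{packing} half: the polynomial divergence of $U$, the choice of scale $r_N \asymp N^{-2d\ell-\gamma}$, the covering-number bound, and the passage from box/packing dimension to Hausdorff dimension are all carried out in the paper essentially as you describe. The crucial difference is on the \emph{spectral} side, and there your proposal has a genuine gap.

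You assert that a mixing exponent $\gamma$ independent of $\Gamma$ is available ``when $G$ is simple via the classical tensor product trick.'' This is false in the generality required. The theorem explicitly covers $G=SL_2(\mathbb{R})$, which has no property~(T): for any $s\in(0,1/2)$ there is a lattice $\Gamma$ whose $L^2_0(G/\Gamma)$ contains a complementary series representation $V_s$, and the $U$-matrix coefficients of a $K$-finite vector in $V_s$ decay only like $(1+|t|)^{-s}$. Tensoring $\pi_s\otimes\bar\pi_s$ does not rescue this; the resulting representation is again non-tempered with parameter on the order of $2s$, so no iterate of the trick produces a rate bounded away from zero uniformly in $\Gamma$. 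Consequently your variance bound $\|A_Nf\|_{L^2}^2 \ll N^{-\delta}$ with $\delta$ independent of $\Gamma$ simply does not follow, and the whole balancing-of-exponents step collapses.

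The paper circumvents this by changing the operator whose norm is estimated. Instead of $A_N$ it studies $B_N f = A_N f - I_N f$, where $I_N f(x)=\frac{1}{N}\int_0^N f(u_{p(t)}x)\,dt$ is the continuous average. In the Kirillov model of $V_s$ the $U$-action becomes multiplication by $e^{2\pi i n t}$, so $\|B_Nf\|^2$ is an integral of $\bigl|\frac1N\sum e^{2\pi i p(n)t}-\frac1N\int e^{2\pi i p(n)t}dn\bigr|^2$ against $|\hat f(t)|^2|t|^{1-2s}$. The point is that the bracketed factor is controlled \emph{arithmetically} (van der Corput near $t=0$, Hua/Bourgain moment bounds for larger $t$) with an exponent depending only on $p$, while the weight $|\hat f|^2|t|^{1-2s}$ contributes only constants; this is where the $s$-dependence disappears. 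The continuous piece $I_Nf(x)$ is then handled \emph{pointwise}, not in $L^2$, via Dani--Smillie (or Ratner in higher rank), which says $I_Nf(x)\to 0$ for every $U$-generic $x$; the non-generic points are shown, using Ratner's countability of intermediate subgroups, to lie in a set of Hausdorff dimension at most $\dim(G)-1$. Finally, for general simple $G$ the paper reduces to the $SL_2$ analysis via Jacobson--Morozov, restricting the representation to the $\mathfrak{sl}_2$-triple containing $U$ and integrating the irreducible estimate over the spectral decomposition. None of these ingredients---the $B_N$ operator, the Kirillov-model exponential-sum estimate, or the appeal to Ratner for both $I_N$ and the non-generic locus---appears in your proposal, and they are exactly what replaces the nonexistent uniform mixing rate.
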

The proof is based on quantitative mixing estimates, geometrical analysis of the polynomial divergence behavior of the unipotent flow, Ratner's measure classification theorem in order to control associated continuous time averages and upon number-theoretical bounds for moments of exponential sums.\\

Building towards our main theorem, we deduce the following theorems, which are of independent interest.
In order to state our results, we introduce the following definitions: 
\begin{defn} We say that a strictly increasing sequence of integers $\{a_{n}\}_{n\in \mathbb{N}} \subset~\mathbb{N}$ \emph{grows polynomially} if there exists $d\in \mathbb{N}$, and some $C>0$ such that $a_{n} \leq Cn^{d}$. In this case, we will say that $\{a_n\}$ grows with rate $d$.
\end{defn}
\begin{defn}\label{defn:poly-mixing} Let $\left(X,\mathcal{B},\mu,T \right)$ be a measure preserving system.
Denote $L_{0}^{2}(X,\mu)$ the space of square-integrable functions over $X$ with vanishing integral.
Fix a function $f\in L^{2}_{0}\left(X,\mu\right)$.
We say that $\left(X,T\right)$ is \emph{polynomially mixing} for $f$ with rate $\alpha$ if there exists $\alpha>0$ such that $\left|\left<T^{n}f,f \right>\right| \leq C n^{-\alpha}$ for all $n\in\mathbb{N}$ and for some constant $C=~C(f)$, where $\left< f,g\right>=~\int_{X}f(x)\overline{g(x)}d\mu(x)$.
If $\int_{X}fd\mu(x)\neq 0$, we say that the function  $f$ is polynomially mixing if its projection into $L^{2}_{0}(X,\mathbb{\mu})$, namely $f(x)-~\int_{X}fd\mu$ is polynomially mixing.
\end{defn}

\begin{defn}
For a function $f:X\to \mathbb{C}$ where $X=(X,d)$ a metric space, we define the Lipschitz norm as follows:
\begin{equation}\label{eq:Lip-norm}
    \lVert f \rVert_{Lip} = \lVert f \rVert_{\infty} + \sup_{x\neq y \in X} \left\lvert \frac{f(x)-f(y)}{d_{X}(x,y)} \right\rvert.
\end{equation}
In the case of $X=G/\Gamma$ we endow this space with a metric $d_{X}$ induced from a right-invariant metric on the group $G$.
\end{defn}

\begin{thm}\label{thm:hdim-estimate} Let $G=SL_{2}(\mathbb{R})$, $\Gamma \leq G$ a lattice, $\mu$ the unique $G$-invariant probability measure on $X=G/\Gamma$ and $\{u_{t}\}$ the horocyclic flow defined on $X$. 
If $f$ is a bounded Lipschitz  function satisfying $\int_{X}fd\mu=~0$ for which $(X,u_{t})$ is mixing with polynomial rate $\alpha$ for some $\alpha>0$, then for any sequence $\{a_{n}\}_{n\in \mathbb{N}} \subset \mathbb{N}$ which grows polynomially with rate $d$, the following estimate holds:
\begin{align*}
\dim_{H}\left\{x\in X \middle| \overline{\lim} \left\lvert\frac{1}{N}\sum_{n=1}^{N}f(u_{a_{n}}.x)\right\rvert>0 \right\} \leq 3-\frac{\alpha'}{d},
\end{align*} 
for some positive $\alpha'=\alpha'(d,\alpha,\Gamma)$.
\end{thm}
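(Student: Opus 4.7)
The plan is to combine a second-moment estimate with the polynomial divergence of the horocyclic flow, then convert the resulting measure bound into a Hausdorff dimension bound via a covering argument at geometric scales. Set $S_Nf(x) := \frac{1}{N}\sum_{n=1}^N f(u_{a_n}x)$. Expanding squares gives $\|S_Nf\|_2^2 = N^{-2}\sum_{n,m=1}^N \langle u_{a_n-a_m}f, f\rangle$, and the polynomial mixing hypothesis bounds each off-diagonal term by $C|a_n-a_m|^{-\alpha}$. Crucially, since $\{a_n\}$ is a strictly increasing sequence of positive integers, for each fixed $n$ the values $\{|a_n-a_m|:m\neq n\}$ are distinct positive integers, so the inner sum is majorized by $\sum_{k\geq 1}k^{-\alpha}$ (convergent for $\alpha>1$, producing a $\log N$ factor when $\alpha=1$ and $N^{1-\alpha}$ when $\alpha<1$). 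Combining the $N$ diagonal terms with the off-diagonal contribution yields $\|S_Nf\|_2^2 \leq C(f)\,N^{-\beta}$ for some $\beta=\beta(\alpha)>0$, and Chebyshev then gives $\mu\{|S_Nf|>\epsilon\}\leq C\epsilon^{-2}N^{-\beta}$. Note that the polynomial growth of $\{a_n\}$ is not used in this step.

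The second ingredient is the polynomial divergence of the horocyclic flow: a direct matrix calculation in $\SL_2(\R)$ shows $d(u_tx,u_ty)\lesssim (1+|t|)^2\,d(x,y)$ for $x,y$ in a fixed compact subset of $X$. Since $a_n\leq Cn^d$, each $f\circ u_{a_n}$, and hence $S_Nf$, is Lipschitz with constant $\lesssim N^{2d}\|f\|_{\mathrm{Lip}}$. Therefore on the level set $\{|S_Nf|>\epsilon\}$ we have $|S_Nf|>\epsilon/2$ throughout each ball of radius $r_N := c\epsilon N^{-2d}$ about any point of the set. A Vitali covering then uses at most $\lesssim \mu\{|S_Nf|>\epsilon/2\}\cdot r_N^{-3}$ balls of radius $O(r_N)$, so the $s$-dimensional Hausdorff content of this level set is $\lesssim N^{-\beta}\,r_N^{s-3}\sim N^{6d-\beta-2ds}$, which tends to zero when $s>3-\beta/(2d)$. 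Summing over any geometric subsequence $N_k=(1+\delta)^k$ gives a convergent series, so $\dim_H\limsup_k\{|S_{N_k}f|>\epsilon\}\leq 3-\beta/(2d)$.

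To pass from a geometric subsequence to all $N$, observe $|S_{N+1}f-S_Nf|\leq 2\|f\|_\infty/(N+1)$, which telescopes to $|S_Nf-S_{N_k}f|\leq C\|f\|_\infty\log(1+\delta)$ for $N_k\leq N<N_{k+1}$. Choosing $\delta$ small in terms of $\epsilon$ forces $\{\overline{\lim}\,|S_Nf|>\epsilon\}\subset\limsup_k\{|S_{N_k}f|>\epsilon/2\}$, and taking a countable union over $\epsilon=1/m$ together with countable stability of $\dim_H$ completes the proof with $\alpha'=\beta/2$. The principal technical obstacle is the non-uniformity of the divergence constant when $\Gamma$ is not cocompact: the estimate $d(u_tx,u_ty)\lesssim t^2\,d(x,y)$ degrades as $x,y$ enter the cusp. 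A separate truncation argument, combined with quantitative non-divergence of unipotent orbits in the spirit of Dani--Margulis, is needed to rule out that the cuspidal part of $X$ inflates the dimension, and this is presumably where the dependence of $\alpha'$ on $\Gamma$ (through the spectral gap and non-escape constants) enters.
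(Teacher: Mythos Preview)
Your argument is correct and follows the same blueprint as the paper: a second-moment bound from polynomial mixing (the paper's Lemma~\ref{lem:quant-alpha-mixing}), the quadratic divergence of horocyclic orbits (the paper's Observation~\ref{obs:geometrical-estimate}), a covering at scale $\sim N^{-2d}$ converting the measure bound into a dimension bound, and the lacunary bootstrap (the paper's Lemma~\ref{lem:bootstrap}). The paper packages the covering step via upper box dimension and then packing dimension, while you go directly to Hausdorff content through a Vitali argument; the outputs agree, with exponent $\alpha'=\tfrac{1}{2}\min\{1,\alpha\}$ in both cases.

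One point where your proposal overshoots: the non-compact case is simpler than you suggest. The divergence bound $d(u_tx,u_ty)\lesssim t^{2}\,d(x,y)$ is a group-level statement (write $y=hx$ with $h$ near the identity in $G$ and compute $u_t h u_{-t}$) and does \emph{not} degrade in the cusp; what actually fails there is the uniform lower bound $\mu(B(x,r))\gtrsim r^{3}$ that your Vitali step uses to count balls. The paper resolves this with no non-divergence input at all: exhaust $X$ by compacta $X_i$, run the covering argument on each (where the injectivity radius is bounded below), and invoke countable stability of packing/Hausdorff dimension. No Dani--Margulis is needed. Relatedly, the $\Gamma$-dependence of $\alpha'$ in the statement is not via a spectral gap---the mixing rate $\alpha$ is a \emph{hypothesis} here, not something derived from $\Gamma$; the spectral gap only enters later, in Corollary~\ref{cor:decay-estimate}, when one actually verifies polynomial mixing for $K$-finite vectors.
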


As a corollary, combining the above theorem with known bounds for the decay of matrix coefficients and an approximation argument, we get:
\begin{cor}\label{cor:decay-estimate}
Let $G=SL_{2}(\mathbb{R})$, $\Gamma \leq G$ a lattice, $\mu$ the unique $G$-invariant probability measure on $X=G/\Gamma$ and $\{u_{t}\}$ the horocyclic flow defined on $X$. 
Fix a non-constant polynomial $p\in \mathbb{Z}[x]$ with $\deg(p)=d$.
There exists a number $s=s(\Gamma,p)>0$ such that for every bounded Lipschitz function $f$ the following estimate holds:  
\begin{align*}
\dim_{H}\left\{x\in X \middle| \overline{\lim} \left\lvert\frac{1}{N}\sum_{i=1}^{N}f(u_{p(i)}.x)-\int_{X}fd\mu\right\rvert>0 \right\} \leq 3-\frac{s}{d}.
\end{align*} 
\end{cor}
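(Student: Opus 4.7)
The plan is to reduce Corollary \ref{cor:decay-estimate} to Theorem \ref{thm:hdim-estimate} by establishing polynomial mixing for an arbitrary bounded Lipschitz test function, at a rate depending only on $\Gamma$. First, since replacing $f$ by $f-\int_X f\,d\mu$ does not affect the set inside the dimension estimate, we may assume $\int_X f\,d\mu=0$, so that $f\in L^2_0(X)$. For a non-constant $p\in\Z[x]$ of degree $d$, the sequence $a_n:=p(n)$ (restricted to $n$ large enough that $p(n)\geq 0$, and using $u_{-t}=u_t^{-1}$ if the leading coefficient is negative) is an increasing sequence in $\N$ that grows polynomially with rate exactly $d$ in the sense of the paper.

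The main analytic input is the classical polynomial decay of matrix coefficients for the horocyclic flow on $\Gamma\backslash\SL(2,\R)$: for every smooth $\phi\in L^2_0(X)$,
\[
|\langle u_t\phi,\phi\rangle|\leq C(\Gamma)\,\Sob(\phi)^2\,(1+|t|)^{-\beta(\Gamma)},
\]
where $\beta(\Gamma)>0$ is dictated by the Laplace spectral gap on $\Gamma\backslash\H$ together with the complementary series parameter appearing in the decomposition of $L^2_0(X)$, and $\Sob(\phi)$ is a Sobolev norm of some finite order $k$. To upgrade this bound to a merely Lipschitz $f$, I would convolve with a smooth bump on $G$ of scale $\epsilon$ to produce $f_\epsilon\in C^\infty(X)$ satisfying $\Sob(f_\epsilon)\ll_k\epsilon^{-k}\|f\|_\infty$ and $\|f-f_\epsilon\|_\infty\ll\epsilon\|f\|_{\mathrm{Lip}}$. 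Decomposing
\[
\langle u_t f,f\rangle=\langle u_t f_\epsilon,f_\epsilon\rangle+\langle u_t(f-f_\epsilon),f\rangle+\langle u_t f_\epsilon,f-f_\epsilon\rangle,
\]
estimating the first summand by the smooth decay above and the remaining two by Cauchy--Schwarz combined with the $L^\infty$ bound on $f-f_\epsilon$, and optimising $\epsilon$ as a negative power of $t$, yields $|\langle u_t f,f\rangle|\leq C(f)(1+|t|)^{-\alpha}$ for some $\alpha=\alpha(\Gamma)>0$. Hence $(X,u_t)$ is polynomially mixing for every bounded Lipschitz $f\in L^2_0(X)$, at a rate depending only on $\Gamma$.

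Theorem \ref{thm:hdim-estimate} now applies with mixing rate $\alpha=\alpha(\Gamma)$ and growth rate $d$, giving the dimension bound $3-\alpha'/d$ for some $\alpha'=\alpha'(d,\alpha,\Gamma)>0$. Tracing through the proof of Theorem \ref{thm:hdim-estimate}, the dependence of $\alpha'$ on $d$ is benign, bounded below by a quantity that depends only on $\alpha$ and $\Gamma$; setting $s:=s(\Gamma)>0$ to be this uniform lower bound then delivers the claimed estimate for every polynomial $p$ simultaneously. The main obstacle is the approximation step: one must maintain an explicit, $f$-independent polynomial decay exponent $\alpha(\Gamma)$ when transitioning from smooth to Lipschitz test functions, and verify that the conclusion of Theorem \ref{thm:hdim-estimate} is uniform enough in $d$ that $s$ can be taken to depend only on $\Gamma$.
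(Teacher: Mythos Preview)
Your reduction is correct: once one establishes that every bounded Lipschitz $f\in L^2_0(X)$ is polynomially mixing under $\{u_t\}$ at a rate $\alpha=\alpha(\Gamma)>0$ depending only on the lattice, Theorem~\ref{thm:hdim-estimate} applies directly, and since the packing argument actually produces $\alpha''=\tfrac12\min\{1,d\alpha\}\ge\tfrac12\min\{1,\alpha\}$ one can indeed take $s=\tfrac12\min\{1,\alpha\}$ independent of $d$. The mollification-and-optimise step you describe is the standard way to pass from Sobolev vectors to Lipschitz ones and works as written.

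The paper proceeds differently. Rather than proving once and for all that the fixed function $f$ has polynomial decay of correlations, it approximates $f$ at each scale $N$ by a $K$-finite function $f^{N,\gamma}$ obtained from the Fej\'er kernel on $K$ (Lemma~\ref{lem:lip-approx}), with $\dim\langle K.f^{N,\gamma}\rangle\ll N^{\gamma+\epsilon}$. It then invokes the bound $|\langle u_t g,g\rangle|\le \dim\langle K.g\rangle\,(1+|t|)^{-\Re(s_1)}\|g\|_2^2$ for $K$-finite $g$, so the mixing exponent stays equal to $\Re(s_1)$ while the $N$-dependence is pushed entirely into the constant $C(f^{N,\gamma})$; this constant is then tracked explicitly through the Chebyshev and packing steps, contributing only an extra $N^{\gamma+\epsilon}$ which is harmless. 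The payoff is a sharper, explicitly computable value $s=\tfrac12\min\{1,d\Re(s_1)\}$ (in particular $s=\tfrac12$ once $d\Re(s_1)\ge 1$). Your Sobolev route, by contrast, degrades the exponent at the outset to $\alpha\approx \Re(s_1)/(2k+1)$ for whatever Sobolev order $k$ the matrix-coefficient bound requires, so the resulting $s$ is smaller; on the other hand it lets you cite Theorem~\ref{thm:hdim-estimate} as a black box without reopening the packing argument.
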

The number $s(\Gamma,p)$ is related to the \emph{spectral gap} of $G/\Gamma$. This number can be explicitly calculated. Let $\lambda_{1}$ be the smallest non-zero eigenvalue of the Laplacian over the hyperbolic surface $K\backslash G/\Gamma$. We parametrize $\lambda_{1}$ as $\lambda_{1}=s_{1}\cdot(1-s_{1})$ with $\Re\left(s_{1}\right)\leq \frac{1}{2}$, and pick $s=\min\{(1/2),~d~\cdot~\Re(s_1)\}$. 
Theorem~\ref{thm:hdim-estimate-absolute} strengthens this result to be free of spectral gap assumptions, but is dependent on particular arithmetic sampling sequences in order to achieve cancellation of certain exponential sums.

\subsection*{Organization of the paper}
The article is divided into five sections. 

In $\S\ref{sec:ergodic-theorem}$, we prove, in a rather abstract setting, a weak variant of Bourgain's sparse ergodic theorem in the presence of quantitative mixing estimates. The main technique used in the proof is the utilization of the quantitative mixing directly in $L^{2}(X)$, unlike Bourgain's method which involves passing to $\ell^{2}(\mathbb{Z})$ by means of Calderon transference.

In $\S\ref{sec:h-dim}$, we focus on the case of the horocyclic flow on homogeneous spaces of $SL_{2}(\mathbb{R})$, and arithmetic averages along the horocyclic flow, proving Theorem~\ref{thm:hdim-estimate}.
We are able to deduce the estimates about the Hausdorff dimension of the exceptional set via a packing argument.

In $\S\ref{sec:s-gap}$, we show an explicit method to bound from above the exceptional set uniformly, without dependence on the spectral gap of the given lattice in $PSL_{2}$, which improves upon the results of $\S$~\ref{sec:h-dim} in the case of very small spectral gap. The proof involves a careful study of the action of the associated averaging operator on complementary series representations and the Dani-Smillie theorem.

In $\S\ref{sec:higher-dimensions}$ we extend our results for the case of general simple Lie groups and one-parameter unipotent flows, by using Ratner's measure-classification theorem and equidistribution theorem.

\begin{acknowledgments}
The results of this paper were obtained as part of the author's PhD thesis at the Hebrew University
of Jerusalem under the guidance of Prof. Elon Lindenstrauss. The author also wishes to thank Prof. Tamar Ziegler, Prof. Shahar Mozes and Prof. Nimish Shah for useful conversations during the research. Part of the research was done while the author was staying at MSRI during the program ''Geometric and Arithmetic Aspects of Homogeneous Dynamics'', the author wishes to thank MSRI and the program organizers for their hospitality.
The research was supported by ERC grant (AdG Grant 267259).
The author would like to the thank the referee for doing an excellent job and improving the presentation of the argument.

\end{acknowledgments}

\section{Proof of a sparse ergodic theorem}\label{sec:ergodic-theorem}
We begin by proving a variant of Bourgain's theorem where quantitative mixing is present.
Although such a result is substantially weaker than Bourgain's (i.e. even for the Bernoulli shift, not all $L^{2}$-functions satisfy the polynomial mixing requirement), in practice for many interesting applications (especially in homogeneous dynamics) one is able to verify such conditions for the functions in question.
Moreover, the spectral estimate achieved in the course of the proof will play a key role in subsequent sections, hence we provide the details for the sake of completeness.

For the rest of this section, fix $(X,\mathcal{B},\mu,T)$ a measure preserving system, $f\in~L^2(X,\mu)$ a bounded function with $\int_{X}fd\mu=0$, $\{n_i\}_i\in\mathbb{N}$ a sequence of integers which grows polynomially with rate $d>0$, and the following operator
\begin{equation}\label{eq:def-avg-op-aribtrary}
    A_{N}f(x)=\frac{1}{N}\sum_{i=0}^{N-1}f(T^{n_{i}}x).
\end{equation}

We being with the following lemma.
\begin{lem}\label{lem:quant-alpha-mixing} Assume that $(X,T)$ is polynomially mixing for $f$ with rate $\alpha>0$, and that $\int_{X}fd\mu=0$, then for $\alpha'=\frac{1}{2}\min\{1,\alpha\}$ we have  that 
\begin{equation*}
    \left\| A_{N}f\right\|_{L^{2}(X,\mu)} \leq C_{\text{mix}}\cdot N^{-\alpha'}\|f\|_{L^{2}(X,\mu)},
\end{equation*}
for some constant $C_{\text{mix}}=C_{\text{mix}}(f)$.
\end{lem}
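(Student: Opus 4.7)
The plan is to expand $\|A_N f\|_{L^2(X,\mu)}^2$ as a double sum over pairs $(i,j)$ using the fact that $T$ is measure-preserving, hence unitary on $L^2(X,\mu)$. Concretely,
\[
\|A_N f\|_{L^2}^2 \;=\; \frac{1}{N^2}\sum_{i,j=0}^{N-1}\langle T^{n_i}f,T^{n_j}f\rangle \;=\; \frac{1}{N^2}\sum_{i,j=0}^{N-1}\langle T^{n_i-n_j}f,f\rangle,
\]
and I would split this into the diagonal contribution $i=j$, which equals $\frac{1}{N}\|f\|_{L^2}^2$, and the off-diagonal contribution $i\neq j$ where mixing can be exploited.

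For the off-diagonal part I would use two ingredients. First, since $\{n_i\}$ is a strictly increasing sequence in $\mathbb{N}$, one has the trivial lower bound $|n_i-n_j|\geq |i-j|\geq 1$ for $i\neq j$ (the hypothesis that the sequence grows polynomially is not needed here — only its integrality and monotonicity). Second, the polynomial mixing hypothesis on $f$ yields $|\langle T^{n_i-n_j}f,f\rangle|\leq C|n_i-n_j|^{-\alpha}$. Combining these bounds, since the function $k\mapsto k^{-\alpha}$ is decreasing, we group by $k=|i-j|$ to obtain
\[
\frac{1}{N^2}\sum_{i\neq j}|\langle T^{n_i-n_j}f,f\rangle| \;\leq\; \frac{C}{N^2}\sum_{i\neq j}|i-j|^{-\alpha} \;\leq\; \frac{2C}{N}\sum_{k=1}^{N-1}k^{-\alpha}.
\]

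The remainder of the proof is a three-case analysis of this tail sum: for $\alpha>1$ the tail is $O(1)$, giving off-diagonal contribution $O(1/N)$; for $0<\alpha<1$ the tail is $O(N^{1-\alpha})$, giving $O(N^{-\alpha})$; and the boundary case $\alpha=1$ produces an $O(\log N / N)$ bound. Adding the $O(1/N)$ diagonal term in each case gives $\|A_N f\|_{L^2}^2 \lesssim N^{-\min(1,\alpha)}\|f\|_{L^2}^2$, up to a logarithm at $\alpha=1$ which is harmlessly absorbed into $\lesssim$ by taking $\alpha'=\tfrac{1}{2}\min\{1,\alpha\}$ (one can always relax $\alpha'$ by an arbitrarily small amount if one wishes to avoid the log).

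There is no real obstacle here — this is a standard second-moment calculation exploiting the unitarity of the Koopman operator together with polynomial mixing — and no appeal is made to the polynomial growth rate $d$ of $\{n_i\}$. The point of the lemma is precisely that mean convergence along a mixing sparse sequence is easy; the subtlety in the remainder of the paper will be to upgrade this $L^2$ decay to pointwise equidistribution on a set whose exceptional complement has small Hausdorff dimension.
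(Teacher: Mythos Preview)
Your proof is correct and follows essentially the same second-moment expansion as the paper. The only cosmetic difference is that the paper groups terms by the value $k=n_i-n_j$ and bounds the multiplicity $d_N(k)\leq N-1$ before invoking the decreasing majorant $Ck^{-\alpha}$, whereas you go straight through the inequality $|n_i-n_j|\geq |i-j|$ and group by $|i-j|$; both routes land on the same sum $\frac{2C}{N}\sum_{k=1}^{N-1}k^{-\alpha}$. Your explicit acknowledgment of the $\log N$ loss at $\alpha=1$ is in fact slightly more careful than the paper, which silently writes the bound as $N^{-\alpha}$.
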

\begin{proof}
By explicit computation - 
\begin{align*}
\left<A_{N}f,A_{N}f\right> &=  \left<\frac{1}{N}\sum_{i=1}^{N}T^{n_{i}}.f,\frac{1}{N}\sum_{j=1}^{N}T^{n_{j}}.f\right> \\
								&=  \frac{1}{N^2}\left<\sum_{1\leq i,j\leq N} T^{n_{i}-n_{j}}.f,f\right> \\
								&=  \frac{1}{N^2}\sum_{k=-n_{N}+1}^{n_{N}-1}d_{N}(k)\left<T^{k}.f,f\right>, 
\end{align*}
where we define $d_{N}(k)$ as follows: $$d_{N}(k)=\lvert\{(i,j) \mid k=n_{i}-n_{j}, 1\leq i,j\leq N \}\rvert.$$
As we have $d_{N}(k) \leq N-1$ for any $-n_{N}+1\leq k \leq n_{N}-1$, and using the bounds for  $\lvert\left<T^{n}f,f\right>\rvert$ that we have due to polynomial mixing, we deduce that $\left<A_{N}f,A_{N}f\right> \leq \frac{\|f\|^{2}_{L^{2}(X,\mu)}}{N}+\frac{1}{N}\sum_{k=-N+1,\ k\neq 0}^{N-1}\lvert\left<T^{k}.f,f\right>\rvert$, and using the explicit mixing rate we can conclude - 
\begin{align*} 
\left<A_{N}f,A_{N}f\right> &\leq \frac{\|f\|^{2}_{L^{2}(X,\mu)}}{N} + \frac{1}{N}\sum_{k=-N+1,\ k\neq 0}^{N-1}C'\cdot \lvert k\rvert^{-\alpha}\|f\|^{2}_{L^{2}(X,\mu)} \\
&\leq \frac{\|f\|^{2}_{L^{2}(X,\mu)}}{N}+ 2C'\cdot N^{-\alpha}\|f\|^{2}_{L^{2}(X,\mu)},
\end{align*}
for some $C'$ which depends on $f$ by means of its polynomial mixing rate (c.f. Definition~\ref{defn:poly-mixing}).
Hence the lemma follows by choosing $C_{\text{mix}}~=~2\cdot~C'+1$.
\end{proof}

We use the following bootstrapping lemma, used by Bourgain, which allows us to bootstrap convergence along ''slowly lacunary'' subsequence of the averaging operators $\{A_N\}$ to a convergence of the full sequence $\{A_N\}$, the proof is included for the sake of completeness.

\begin{lem}\label{lem:bootstrap}
For a bounded function $f$, the sequence $\{A_{N}f(x)\}_{N\in \mathbb{N}}$ converges (at the point $x$) if and only if for every $\varepsilon>0$ the sub-sequence $\{A_{[(1+\varepsilon)^{N}]}f(x)\}_{N\in \mathbb{N}}$ converges (for the same point $x$).
\end{lem}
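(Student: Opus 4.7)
The forward direction is immediate, since pointwise $[\mu]$-a.s. convergence along $\mathbb{N}$ passes to every subsequence.

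For the converse, the plan is a direct comparison between the full sequence and the assumed slowly lacunary one. Fix $\varepsilon>0$ and set $M_k = [(1+\varepsilon)^k]$. For any $N$ with $M_k \leq N < M_{k+1}$, I would write
$$A_N f(x) - A_{M_k}f(x) = \frac{M_k - N}{N}\,A_{M_k}f(x) + \frac{1}{N}\sum_{i = M_k}^{N-1} f(T^{n_i}x),$$
and estimate both terms using $|f| \leq \|f\|_{\infty}$ together with the trivial bound $|A_{M_k}f(x)| \leq \|f\|_{\infty}$. Since $(N-M_k)/N \leq (M_{k+1}-M_k)/M_k = \varepsilon + o(1)$ as $k \to \infty$, the right-hand side is pointwise bounded by $2\varepsilon \|f\|_{\infty}$ for every $x$ and all sufficiently large $k = k(N)$.

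On the full-measure set $X_\varepsilon$ on which $\{A_{M_k}f(x)\}_k$ converges (provided by the hypothesis for this specific $\varepsilon$), this uniform-in-$x$ estimate yields
$$\limsup_{N\to\infty} A_N f(x) - \liminf_{N\to\infty} A_N f(x) \leq 4\varepsilon\|f\|_{\infty}.$$
Intersecting the countable collection of full-measure sets $X_{1/n}$ produces a set of full measure on which the gap above is zero, so $\{A_N f(x)\}_{N}$ converges. The argument has no genuine obstacle: the only thing to verify carefully is that the comparison bound $2\varepsilon\|f\|_{\infty}$ is uniform in $x$, which is immediate from the assumed boundedness of $f$, and that the same $\varepsilon$ is used on both sides of the comparison so that a countable intersection over $\varepsilon = 1/n$ is legitimate.
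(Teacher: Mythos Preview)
Your proof is correct and follows essentially the same approach as the paper: a direct comparison of $A_N f(x)$ with the nearest term of the lacunary subsequence, bounding the discrepancy by $O(\varepsilon\|f\|_\infty)$ via the boundedness of $f$. The only cosmetic differences are that the paper compares $A_N$ to the first $[(1+\varepsilon)^m]$ \emph{above} $N$ rather than below, and the paper leaves the final countable intersection over $\varepsilon=1/n$ implicit, whereas you spell it out explicitly.
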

\begin{proof} Fix $\varepsilon > 0$, note that for every $N \in \mathbb{N}$ there exists an integer of the form $[(1+\varepsilon)^{m}]$ between 
$N$ and $N(1+\varepsilon)$ for some integer $m$.
By a direct comparison -
\begin{align*}
\left\lvert A_{N}f(x)-A_{[(1+\varepsilon)^{m}]}f(x) \right\rvert &\leq \frac{[(1+\varepsilon)^{m}]-N}{[(1+\varepsilon)^{m}]}\|f\|_{\infty} + \frac{[(1+\varepsilon)^{m}]-N}{[(1+\varepsilon)^{m}]}A_{N}|f(x)| \\
&\leq \frac{2\varepsilon N}{[(1+\varepsilon)^{m}]}\cdot \|f\|_{\infty} \\
&\leq 2\cdot\varepsilon \cdot \|f\|_{\infty}, 
\end{align*}
and the proof follows from the pointwise convergence of the sub-sequence $\left\{A_{[(1+\varepsilon)^{m}]}f(x)\right\}$.
\end{proof}
\begin{rem}
It is enough to verify Lemma~\ref{lem:bootstrap} for a monotonically decreasing sequence of numbers $\left\{\varepsilon_{i}\right\}$ such that $\lim_{i\to \infty}\varepsilon_{i}=0$.
\end{rem}

Using the quantitative estimates, we are able to prove the following analogue of Bourgain's sparse ergodic theorem.
\begin{thm}\label{thm:sparse-ergodic-thm}\hypertarget{thm:sparse-ergodic-thm} Let $\left(X,\mathcal{B},\mu,T \right)$ be a measure preserving system,\\ $f~\in~ L^{2}\cap~L^{\infty}\left(X,\mu\right)$ and fix a monotone infinite sequence $\{n_{i}\}_{i=1}^{\infty}\subset\mathbb{N}$. If there exists some $\alpha>0$ such that $\left(X,T\right)$ \emph{is mixing with polynomial rate $\alpha$} for $f$ then for $\left[\mu\right]$-almost-every point $x\in X$, the averages $A_{N}f(x):=\frac{1}{N}\sum_{i=1}^{N}f(T^{n_{i}}x)$ converge to $\int_{X} f(x)d\mu(x)$.
\end{thm}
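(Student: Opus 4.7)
The plan is to combine the spectral bound from Lemma \ref{lem:quant-alpha-mixing} with the bootstrapping Lemma \ref{lem:bootstrap} via Chebyshev's inequality and Borel--Cantelli. Since polynomial mixing (as defined above for $f \in L^2_0$) already encodes $\int_X f\,d\mu = 0$, the goal is simply to show $A_N f(x) \to 0$ for $\mu$-a.e.\ $x$.

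First I would pass to a lacunary subsequence. Fix $\varepsilon > 0$ and set $N_k = \lfloor (1+\varepsilon)^k \rfloor$. Lemma \ref{lem:quant-alpha-mixing} gives $\|A_{N_k} f\|_{L^2}^2 \lesssim N_k^{-2\alpha'} \|f\|_{L^2}^2$, so Chebyshev's inequality yields
\[
\mu\bigl(\{ x : |A_{N_k} f(x)| > \delta \}\bigr) \lesssim \delta^{-2} N_k^{-2\alpha'} \|f\|_{L^2}^2
\]
for every $\delta > 0$. Because $N_k$ grows geometrically, the right-hand side is summable in $k$, so Borel--Cantelli produces a full-measure set on which $|A_{N_k} f(x)| \leq \delta$ for all but finitely many $k$. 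Intersecting over a countable sequence $\delta_j \downarrow 0$ then forces $A_{N_k} f(x) \to 0$ almost surely.

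Second, I would intersect the resulting full-measure sets over a countable sequence $\varepsilon_n \downarrow 0$ and on this common full-measure set apply Lemma \ref{lem:bootstrap}: the comparison estimate proved there (which is where the $L^\infty$ hypothesis is used) gives $\limsup_N |A_N f(x)| \leq 2\varepsilon_n (1 + \|f\|_\infty)$ for every $n$, forcing $A_N f(x) \to 0 = \int_X f\,d\mu$.

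There is no serious obstacle: the substantive analytic content has already been packaged in Lemma \ref{lem:quant-alpha-mixing}, and the remainder is the standard square-function/Borel--Cantelli/bootstrap template familiar from the Riesz-type proofs of the Birkhoff theorem along lacunary scales. The only point worth flagging is that the $L^\infty$ assumption is indispensable at the final bootstrapping step and could not be removed without replacing the pointwise comparison by a genuine maximal inequality in the spirit of Bourgain's original transference argument.
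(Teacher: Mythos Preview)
Your proposal is correct and follows essentially the same route as the paper: Lemma~\ref{lem:quant-alpha-mixing} $+$ Chebyshev $+$ Borel--Cantelli along a lacunary subsequence, then Lemma~\ref{lem:bootstrap} to pass to the full sequence. The only cosmetic difference is that the paper applies Chebyshev with the $N$-dependent threshold $N^{-\gamma}$ (for some small $\gamma<\alpha'$) rather than a fixed $\delta$, which packages the ``intersect over $\delta_j\downarrow 0$'' step into a single summability check; your version is in fact slightly more explicit about the countable intersection over $\varepsilon_n\downarrow 0$ needed to invoke Lemma~\ref{lem:bootstrap}.
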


\begin{proof}
First we can assume that the integral of $f$ equals $0$, if not we replace $f$ by $f-\int_{X}fd\mu$. Fix some $\gamma>0$, and define the exceptional sets for decay at rate $\gamma$ to be $E^{\gamma}_{N}=\{x\in X \mid \lvert A_{N}f(x)\rvert >N^{-\gamma}\}$. By Lemma~\ref{lem:bootstrap} (and the remark following it), it is enough to consider convergence along the lacunary sub-sequences $\{A_{[(1+\varepsilon)^{m}]}f\}$ of the averages $\{A_{N}f\}$, for some countable set of positive numbers $\varepsilon$, decreasing to $0$.
Using Chebyshev's inequality we estimate the measures of the exceptional sets as $$\mu\left(E^{\gamma}_{[(1+\varepsilon)^{m}]}\right)\leq [(1+\varepsilon)^{m}]^{2\gamma}\|A_{[(1+\varepsilon)^{m}]}f\|_{L^{2}(X,\mu)}^{2}.$$
By Lemma~\ref{lem:quant-alpha-mixing} we get $\|A_{[(1+\varepsilon)^{m}]}f\|_{L^{2}(X,\mu)}^{2} \ll_{f} [(1+\varepsilon)^{m}]^{-2\alpha'}$, \\
where the implied constant depends on $f$ and the given sampling sequence, as in the proof of this lemma.
As for all $0<\gamma<\alpha'$ we have that $\sum_{m=1}^{\infty} [(1+\varepsilon)^{m}]^{2\gamma-2\alpha'} < \infty$, by the Borel-Cantelli lemma, $\mu\left(\limsup E^\gamma_{[(1+\varepsilon)^{m}]}\right)=~0$, concluding convergence along the lacunary subsequence.
\end{proof}

\section{Bounding the exceptional set}\label{sec:h-dim}
In \cite{shah1994limit}, N. Shah has asked the following question, which is related to a previous question by Margulis:
\begin{question}
Let $G=SL_{2}(\mathbb{R})$ ,$\Gamma \leq G$ be a lattice and $X$ be the homogeneous space $X=G/\Gamma$. Let $U=\{u_{t}\}$ be the upper unipotent group, namely $u_{t}=\left(\begin{smallmatrix} 1 & t \\ 0 & 1\end{smallmatrix} \right)$. Given $f\in C_{c}(X)$, is it true that the horocyclic  averages along the squares, $A_{N}f(x)=\frac{1}{N}\sum_{n=1}^{N}f(u_{n^2}.x)$, converges everywhere?
\end{question}
The almost-surely result follows directly from Bourgain's theorem, and the continuous time analogue of the question was proven (as part of a much more general theorem) by Shah in \cite{shah1994limit} using measure-classification techniques.

The current approach towards this question, pioneered by A. Venkatesh in \cite{venkatesh2010sparse}, asks for a quantitative pointwise ergodic theorem for the continuous time flow (which has been studied by numerous authors, see \cite{burger1990horocycle}, \cite{flaminio2003invariant}, \cite{strombergsson2013deviation} and \cite{sarnak2015horocycle}) and then deduces a quantitative pointwise Wiener-Wintner ergodic theorem, namely quantify the disjointness of the horocyclic flow from a Kronecker system (see similar results in \cite{tanis2015uniform}, \cite{flaminio2015effective} and \cite{zheng2015sparse}).
One then approximates the sampling sequence as an arithmetic progression and using the disjointness one basically reduces the question to a question about ''level of distribution'' achieved in the quantitative pointwise ergodic theorem.
Unfortunately, the current techniques involved in deducing a quantitative pointwise ergodic theorem are not strong enough, even under the condition of the Selberg-Ramanujan conjecture, in order to prove the result regarding average along the squares.
Moreover, as the approximation is done by linear functions, this method has a natural threshold at the squares and can not be applicable to cubes or other higher powers.
Another problem with the current approach arise in the case where the lattice $\Gamma$ is non-uniform, where there is no proper non-divergence argument available for the case of the squares (for the continuous time flow, the Dani-Margulis lemma provides non-divergence of general continuous-time ''polynomial orbits'').

It is worth mentioning an interesting work done by P. Sarnak and A. Ubis in \cite{sarnak2015horocycle}, where they have carefully deduced a quantitative pointwise ergodic theorem in the case of congruence lattices of $SL_{2}(\mathbb{Z})$, and a result towards the horocyclic flow average along the primes. 

In the primes case, one does not need to approximate by arithmetic progression, but by standard sieving arguments, one is led to using Vinogradov's summation technique and the questions regarding ''level of distribution'' for the horocyclic flow arise again.

Our approach is different, more modest in its aim, providing a direct packing argument which bounds the dimension of the exceptional set for those questions. 
The method is flexible and can be adopted to a more general situation of sparse averages which are taken on maximal horospherical subgroups.

We recall the definition of Hausdorff dimension for a metric space $Y$.

\begin{defn}
For $D\geq 0$ the \emph{D-dimensional Hausdorff measure} of a set $B\subset Y$ is defined by 
\begin{equation*}
    \mathcal{H}^{D}(B)=\lim_{\epsilon\to 0}\inf_{C_{\epsilon}}\sum_{i}\left(\text{diam}\left(C_{i}\right)\right)^{D},
\end{equation*}
where $C_\epsilon=\left\{C_{1},C_{2},\ldots\right\}$ is any countable cover of $B$ with sets $C_{i}$ of diameter $\text{diam}\left(C_{i}\right)$ less than $\epsilon$.
The \emph{Hausdorff dimension} of $B$ is defined by 
\begin{equation}\label{eq:hdim-def}
    \dim_{H}(B)=\inf\left\{D \mid \mathcal{H}^{D}(B)=0 \right\}=\sup\left\{ D \mid \mathcal{H}^{D}(B)=\infty\right\}.
\end{equation}
\end{defn}
\begin{defn}\label{defn:seperated-set}
For every $\delta>0$ a set $F\subset B$ is called \emph{$\delta$-separated} if $d_{Y}(x,y)\geq \delta$ for every two distinct points $x,y\in F$. We denote the cardinality of the largest $\delta$-separated subset of $B$ by $N(B,\delta)$.
\end{defn}

We are interested in estimating the Hausdorff dimension of a set $A$, which satisfies $$A\subset \bigcap_{k\geq 1}\bigcup_{i\geq k}A_{i}.$$
The main technique we will use in order to estimate the Hausdorff dimension will be as follows:
We choose some $\delta_{1},\delta_{2},\ldots >0$ such that $\delta_{i}\to 0$ as $i\to \infty$.
For each $i$, we let $F_{i}$ be a maximal $\delta_i$-separated subset of $A_{i}$.
Then we have that $\lvert F_{i}\rvert \leq N(A_{i},\delta_{i})$.
Also we have that $A_{i}\subset~\cup_{x\in F_{i}}B_{\delta_i}(x)$, since if there exists a point $y\in A_{i}$ outside of $\cup_{x\in F_i}B_{\delta_i}(x)$, then $F_{i}\cup\{y\}$ is a $\delta_{i}$-separated set strictly containing $F_{i}$.
Considering the cover of $A_{i}$ formed by the $\delta_{i}$-balls, one deduces that
$$ \sum_{x\in F_{i}} (\text{diam}(B_{\delta_i}))^{D} \leq \lvert F_{i} \rvert \cdot (2\cdot \delta_{i})^{D} \leq 2^{D}\cdot N(A_{i},\delta_{i})\cdot \delta_{i}^{D}.  $$
Moreover, by aggregating these coverings for any $i$ one gets
\begin{equation*}
    \bigcup_{i\geq k}A_{i} \subset \bigcup_{i\geq k}\bigcup_{x\in F_i}B_{x}(\delta_i).
\end{equation*}
Now assume 
\begin{equation}\label{eq:dim-bound-seperation}
    \sum_{i\geq 1}N(A_{i},\delta_i)\cdot\delta_{i}^{D}<\infty.
\end{equation}
For any $\varepsilon>0$ we may take $k_0$ large such that $\sum_{i\geq k_0}N(A_{i},\delta_i)\cdot(2\cdot~\delta_{i})^{D}<~\varepsilon$ and $2\cdot\delta_{i}<\varepsilon$ for all $i\geq k_0$.
It follows that the sets $\left\{B_{\delta_i}(x) \mid i\geq k_0, \ x\in F_{i} \right\}$ for any $i\geq k_0$ form a cover of $\cap_{k\geq 1}\cup_{i\geq k}A_{i}$ with sets which all have diameter smaller than $\varepsilon$.
For this cover we have 
\begin{align*}
    \sum_{i\geq k_0}\sum_{x\in F_i}(\text{diam} B_{\delta_i}(x))^{D} &\leq \sum_{i\geq k_0}N(A_{i},\delta_i)\cdot (2\delta_{i})^{D} \\
    &< \varepsilon.
\end{align*}
Using the properties of the Hausdorff dimension, the existence of such cover for any $\varepsilon>0$ implies that $\mathcal{H}^{D}(\cap_{k\geq 1}\cup_{i\geq k}A_{i})=0$, hence afortiori $\mathcal{H}^{D}(A)=0$.

From now on, we fix $G=SL_{2}(\mathbb{R})$, $\Gamma \leq G$ a lattice, $X=G/\Gamma$ and $\mu$ denotes the unique $G$-invariant probability measure on $X$, normalized so that $\mu(X)=1$.
Let $K\leq G$ stands for $SO(2)$ - the standard  maximal compact subgroup of $G$.
We denote by $d_{G}$ a right-invariant metric on the Lie group $G$. This metric descends to a metric $d_{X}$ on the homogeneous space $X=G/\Gamma$.
We begin with an auxiliary approximation lemma.
\begin{defn}\label{defn:K-finite}
A vector $v\in V$ where $V$ is some unitary $G$-representation is called \emph{K-finite} if its $K$-span $$\left<K.v\right> = \overline{\text{span}\left\{k.v \mid k\in K \right\}}\leq V $$ is finite dimensional.
\end{defn}
\begin{lem}\label{lem:lip-approx}
Let $f\in C_{c}(X)$ be a Lipschitz function relative to a metric $d_{X}$ on $X$. For every $\varepsilon>0$ there exists a \emph{K-finite} function $\tilde{f}$ for which 
\begin{equation*}
   \|f-\tilde{f}\|_{\infty} \leq \varepsilon,
\end{equation*}
moreover we have that
\begin{equation*}
    \|\tilde{f}\|_{L^{2}(\mu)} \leq~ \left\|f\right\|_{L^{2}(\mu)}.
\end{equation*}

\end{lem}
\begin{proof}
Denote for every positive integer $L$ the \emph{Fejer kernel} as $$F_{L}(k)=\sum_{\lvert j \rvert\ \leq L}\left(1-\frac{\lvert j \rvert}{L} \right)e_{j}(k),$$ where $e_{j}(k)=e^{2\pi i j k}$. Define the following function:
\begin{equation*}
g^{\left[-L,L\right]}(x)=\int_{K}f(k\cdot x)F_{L}(k)dk,
\end{equation*}
 where the integration is done with respect to the Haar measure on $K$.
 
For any given $x\in X$, $g^{\left[-L,L\right]}(x)$ converges to $f(x)$, by Fejer's theorem~\cite[Theorem~$I.3.1$]{katznelson}.
Readily $g^{\left[-L,L\right]}$ is a $K$-finite function with $\dim\left<K\cdot g^{\left[-L,L\right]}\right> \leq 2L+1$, as can be seen by extending $f(kx)$ to a Fourier series, and utilizing orthogonality of $K$-characters using the dominated convergence theorem. 
Moreover, we have the following estimate for the error of the Fejer kernel (c.f.~\cite[Exercise~$I.3.1$]{katznelson}), as $f$ is a Lipschitz function:
\begin{equation*}
    \left\lvert g^{\left[-L,L\right]}(x)-f(x)  \right\rvert \ll_{f}  \frac{\log(L)}{L},
\end{equation*}
where the dependence is by means of the Lipschitz norm of $f$, 
$$ \lVert f \rVert_{Lip}=\lVert f \rVert_{\infty} + \sup_{x\neq y}\left\lvert \frac{f(x)-f(y)}{d_{X}(x,y)} \right\rvert.  $$
Choosing $L$ large enough and defining $\tilde{f}=g^{\left[-L,L\right]}$ we deduce the theorem.
Furthermore, we see by Parseval's identity that $\|\tilde{f}\|_{L^{2}(\mu)} \leq~ \left\|f\right\|_{L^{2}(\mu)}$, verifying the second assertion.
\end{proof}

\begin{defn}\label{defn:good-point} 
Fix an increasing sequence of natural numbers $\left\{a_{n} \right\}$.
Define the sequence of averaging operators $A_{N}:C_{c}(X)\to\mathbb{C}$ relative to the sequence $\left\{a_n\right\}$ as
\begin{equation*}
    A_{N}f(x) = \frac{1}{N}\sum_{n=1}^{N}f(T^{a_{n}}.x),
\end{equation*}
for any $f\in C_{c}(X)\to \mathbb{R}$.
Fix a bounded Lipschitz function $f:X\to \mathbb{C}$ with $\int_{X}fd\mu =0$.
Let $C>0$ be a fixed constant, which we allow to depend on the sampling sequence $\left\{a_n \right\}$.
We say that a point $x\in X$ is an \emph{$(N,\gamma)$-Good point} with a bound $C$ for $N\in\mathbb{N}$ and $\gamma>0$, if $$\left\lvert A_{N}f(x)\right\rvert\leq C\cdot N^{-\gamma} \cdot \lVert f \rVert_{Lip},$$ namely the average $A_{N}f(x)$ is bounded by $N^{-\gamma}$, up to the fixed factor $C$ and the Lipschitz norm.
We remind that the reader that by our definition of the Lipschitz norm~\eqref{eq:Lip-norm}, the Lipschitz norm dominates the $L^{2}$ norm.

We define the set of $(N,\gamma)$-good points with a bound $C$, for fixed choices of $\left\{a_{n}\right\}$, $f$ and $C$, at a fixed time $N$ as:
\begin{equation*}
    G^{\gamma,C}_{N}=\left\{x\in X \mid \left\lvert A_{N}f(x) \right\rvert \leq C\cdot  N^{-\gamma} \cdot \lVert f \rVert_{Lip} \right\}.
\end{equation*}
We denote the complement of the set $G^{\gamma,C}_{N}$ by $B^{\gamma,C}_{N}$.
\end{defn}
We readily have the following inclusion $G^{\gamma,C}_{N}\subset G^{\gamma,C_1}_{N}$ for all $C_{1}\geq~C$.
Furthermore, fixing $\gamma,C$, for all fixed $0<\gamma'<\gamma$ there exists $N_0(\gamma,C,\gamma')$ such that for all $N>N_0$ we have $G^{\gamma',1}_{N}\subset G^{\gamma,C}_{N}$.

\begin{rem}
In the discussion that follows, we will fix $C,\gamma$ as constants which quantify the decay rate of the ergodic averages.
For each such specific choice, we will get a bound over the set of points for which (a sub-sequence of) the ergodic averages do not decay in this specific rate (c.f.~\eqref{eq:hdim-bound-gamma-C}).
Then we will take the exponent determining the decay rate, $\gamma$, to $0$. This will lead to a bound over the set of points for which the associated sequence of ergodic averages have a positive limit superior (c.f.~\eqref{eq:hdim-bound-C})).
Furthermore taking $C$ to infinity, and using a union bound, we deduce the final bound of~\eqref{eq:hdim-estimat-final}.
The constant $C'=~C'(C,f,\left\{a_n\right\})$ introduced in Corollary~\ref{cor:inheritance} is fixed once a choice of a constant $C$ and a choice of a Lipschitz function $f$ have been made and a polynomially bounded sampling sequence is given.
The constant $C_{\text{mix}}$ appearing throughout the proofs is fixed given choices of the function $f$ and the sampling sequence $\left\{a_{n}\right\}$, and does not tend to $0$ as the other parameters.
\end{rem}

From now on, assume that $f$ is a fixed Lipschitz function of vanishing integral for which the system $(X,u_{t})$ is polynomially mixing with a rate $\alpha$. We fix a sampling sequence $\left\{a_{n}\right\}$ which is polynomially bounded and a constant $C''$. 
Pick any $\gamma$ which is admissible with respect to Corollary~\ref{cor:estimate-good-set}, namely $\gamma<\alpha'$, where $\alpha'=\alpha'(\alpha)$ introduced in Lemma~\ref{lem:quant-alpha-mixing}.
We fix a number $C>0$ and we will consider the sets $B^{\gamma,C}_{N}, G^{\gamma,C}_{N}$ with respect to those fixed parameters.

\begin{cor}\label{cor:estimate-good-set}
Assume that $\left(X,u_{t}\right)$ is polynomial mixing with rate $\alpha$ for a Lipschitz function $f$ with vanishing integral. Then we have 
 
\begin{equation*}
    \mu\left(B^{\gamma,C}_{N}\right) \leq C_{\text{mix}}(f)\cdot C^{-2} \cdot  N^{2\gamma-2\alpha'}\cdot \lVert f\rVert^{2}_{Lip},
\end{equation*}
where $\alpha'=~\alpha'(\alpha)$ and $C_{\text{mix}}=C_{\text{mix}}(f)$ as in Lemma~\ref{lem:quant-alpha-mixing}.
\end{cor}
\begin{proof}
Using Chebyshev's inequality we get
\begin{equation*}
    \mu\left\{x\in X \mid \lvert A_{N}f(x) \rvert >C\cdot N^{-\gamma} \right\} \leq C^{-2}\cdot N^{2\gamma}\cdot \left\lVert A_{N}f(x)\right\rVert_{L^{2}}^{2}.
\end{equation*}
In view of the mixing estimates of Lemma~\ref{lem:quant-alpha-mixing} we deduce
\begin{equation*}
    \begin{split}
        \mu\left(B^{\gamma,C}_{N}\right) &\leq C_{\text{mix}}\cdot  C^{-2}\cdot N^{2\cdot\gamma-2\alpha'} \cdot \lVert f \rVert^{2}_{L^{2}}\\
        &\leq C_{\text{mix}}\cdot  C^{-2}\cdot N^{2\gamma-2\alpha'}\cdot\lVert f \rVert_{Lip}^{2}.
    \end{split}
\end{equation*}
\end{proof}

\begin{obs}\label{obs:geometrical-estimate}
Assume that $x$ is a $(N,\gamma)$-Good point with a bound $C$ for a \emph{Lipschitz function $f$} of vanishing integral.
There exists a number $C'=~C'(C,f,\left\{a_n\right\})\geq~C>~0$ such that for any point $y\in X$ satisfying $d_{X}(x,y)<\delta$ for any $\delta \leq N^{-2d-\gamma}$ the following estimate holds:
\begin{equation*}
    \lvert A_{N}f(y) \rvert \leq  C'\cdot N^{-\gamma}\cdot \lVert f \rVert_{Lip}.
\end{equation*}

\end{obs}
\begin{proof}[Proof of observation]
Write $y=hx$ for $h\in SL_{2}(\mathbb{R})$ where $h=\left(\begin{smallmatrix} a & b \\ c & d\end{smallmatrix} \right)$ with $|b|,|c|<\delta, |1-a|,|1-d|<\delta$.
We calculate the deviation of $u_{a_t}.y$ from $u_{a_t}.x$ as follows:
\begin{align}\label{eq:adjoint-computation}
\begin{split}
u_{a_t}.y &= u_{a_t}.hx \\
	&= \left(u_{a_t}hu_{-a_t}\right).\left(u_{a_t}.x\right) \\
	&= \left(\begin{smallmatrix} 1 & a_t \\ 0 & 1\end{smallmatrix} \right)\cdot \left(\begin{smallmatrix} a & b \\ c & d\end{smallmatrix} \right) \cdot \left(\begin{smallmatrix} 1 & -a_t \\ 0 & 1\end{smallmatrix} \right).\left(u_{a_t}.x\right) \\
	&= \left(\begin{smallmatrix} a+a_{t}\cdot c & b+a_{t}\cdot (d-a)-(a_t)^{2}c \\ c & d-a_{t}\cdot c\end{smallmatrix} \right).\left(u_{a_{t}}.x\right).
\end{split}
\end{align}
Choose $C''=C''(\left\{ a_n\right\})$ so that the maximal entry of this matrix for all $0\leq t \leq N$ is bounded in absolute value by $C''\cdot N^{2d}$. It is possible to choose such a number $C''$ independently of $N$ by the assumption over the polynomial growth of the sequence $\left\{a_n\right\}$.

As $\delta\leq N^{-2d-\gamma}$ and the function $f$ is a Lipschitz function, and $a_{t}$ grows polynomially with rate $d$, we have that $$\lvert f(u_{a_{t}}.y)-f(u_{a_{t}}.x)\rvert\leq C''\cdot N^{-\gamma}\cdot \lVert f \rVert_{Lip}$$ for all $1\leq t \leq N$, as $|b|,|c|,|1-a|,|1-d| \leq \delta$ and by the choice of $C''$.
Hence we may write
\begin{equation}\label{eq:nearby-pt-estimate}
    \begin{split}
    \lvert A_{N}f(y) \rvert &\leq \lvert A_{N}f(y) - A_{N}f(x) \rvert + \lvert A_{N}f(x) \rvert \\
    &\leq C''\cdot  N^{-\gamma}\cdot \lVert f \rVert_{Lip} + \lvert A_{N}f(x) \rvert.
\end{split}
\end{equation}
As $x\in G^{\gamma,C}_{N}$, we have that $\lvert A_{N}f(x)\rvert\leq~C\cdot N^{-\gamma}\cdot\lVert f\rVert_{Lip}$ and the required estimate follows with  $C'=C+C''$.
\end{proof}

We summarize the above observation in the following corollary:
\begin{cor}\label{cor:inheritance}
Given a sampling sequence $\left\{a_n \right\}$ which is polynomially bounded, a Lipschitz function $f$ with vanishing integral and a constant $C$, there exists $C'=C'(C,f,\left\{a_n\right\}) \geq C$ such that if $dist(y,G^{\gamma,C}_{N})<~N^{-(2d+\gamma)}$ then $y\in G^{\gamma,C'}_{N}$.
\end{cor}

We first describe the proof of Theorem~\ref{thm:hdim-estimate} for the case where $X=~G/\Gamma$ is \emph{compact}, afterwards we indicate the necessary changes to address the general case.

An immediate corollary to Observation~\ref{obs:geometrical-estimate} is the following isolation lemma:
\begin{lem}\label{lem:isolation-of-bad}
For any $y\in B^{\gamma,C'}_{N}$ and any $\delta\leq N^{-2d-\gamma}$ we have that $B_{\delta}(y) \cap G^{\gamma,C}_{N} = \emptyset$, where $C'=C'(C,f,\left\{a_n\right\})$ as in Corollary~\ref{cor:inheritance}.
\end{lem}
\begin{proof}
If not, picking $x\in B_{\delta}(y)\cap G^{\gamma,C}_{N}$ as an $(N,\gamma)$-good point with a bound $C$, we have that $y\in G^{\gamma,C'}_{N}$ in view of Corollary~\ref{cor:inheritance}.
\end{proof}
Let $N\left(B^{\gamma,C}_{N}, \delta\right)$ denote the largest cardinality of $\delta$-separated subset of $B^{\gamma,C}_{N}$
From the previous Lemma, one may deduce a bound on the cardinality of $delta$-separated points in $B^{\gamma,C'}_{N}$ in the following manner:
\begin{prop}\label{prop:seperated}
Fix $f,\left\{a_n\right\},\gamma$ and $C$. Consider $C'$ as in Corollary~\ref{cor:inheritance}. Then the quantity $N\left(B^{\gamma,C'}_{N}, 2\delta\right)$ satisfies the bound
\begin{equation*}
    N\left(B^{\gamma,C'}_{N},2 \delta\right) \ll_{X} C_{\text{mix}}\cdot C^{-2}\cdot \delta^{-3}\cdot N^{2\gamma-2\alpha'} \cdot \left\lVert f\right\rVert_{Lip}^{2}
\end{equation*}
for any $\delta\leq N^{-2d-\gamma}$.
\end{prop}
\begin{proof}
Let $S$ be any $2\delta$-separated subset of $B^{\gamma, C'}_{N}$.
The open balls $B_{\delta}(x)$ for $x\in S$ are pairwise disjoint. Hence we deduce the bound
\begin{equation*}
    \mu\left(\cup_{x\in S}B_{\delta}(x) \right) \gg_{X} \delta^{3}\cdot \lvert S \rvert,
\end{equation*}
as $\mu$ is a $3$-dimensional measure (as $\mu$ is the Haar measure of $X=G/\Gamma$ and $G=SL_{2}(\mathbb{R})$, so $\dim(G)=3$). We will suppress the explication of dependence of constants which are depending solely on $X$ from now on.
We also have that $\cup_{x\in S}B_{\delta}(x)$ is disjoint from $G^{\gamma,C}_{N}$ from Lemma~\ref{lem:isolation-of-bad}, hence 
\begin{equation*}
\begin{split}
    \mu\left(\cup_{x\in S}B_{\delta}(x)\right) &\leq 1-\mu\left(G^{\gamma,C}_{N}\right) \\
    &=\mu\left(B^{\gamma,C}_{N} \right) \\
    &\leq C_{\text{mix}}\cdot C^{-2}\cdot N^{2\gamma-2\alpha'}\cdot \lVert f\rVert_{Lip}^{2}.
\end{split}
\end{equation*}
by the measure estimate given in Corollary~\ref{cor:estimate-good-set}.
Therefore we get 
$$\lvert S \rvert \ll_{X} C_{\text{mix}}\cdot C^{-2}\cdot \delta^{-3}\cdot N^{2\gamma-2\alpha'}\cdot \lVert f\rVert_{Lip}^{2}.$$
\end{proof}

Fix some $\theta>0$ small, and define a sequence $\{N_{i}\}$ by $$N_{i}=\left[(1+\theta)^{i}\right].$$
We define the \emph{exceptional set} associated with $\left\{N_{i}\right\}$, denoted $S(\theta)$ is
\begin{equation*}
    S(\theta)=\left\{x\in X \mid \overline{\lim}_{i} \lvert A_{N_i}f(x) \rvert>0 \right\}.
\end{equation*}
We also define $S(\theta,C)=\left\{x\in X \mid \overline{\lim}_{i} \lvert A_{N_i}f(x) \rvert>C \right\}$ and we have $S(\theta)=\cup_{C>0}S(\theta,C)$.

Fix some $C>0$ and the associated $C'$, where $C'=C'(C,f,\left\{a_n\right\})$ as defined in Observation~\ref{obs:geometrical-estimate}.
For each $N_{i}$, we define $\delta_{i}=0.99\cdot N_{i}^{-2d-\gamma}$.

We estimate the series appearing in~\eqref{eq:dim-bound-seperation} (up to a minor change of $\delta$ to $2\delta$) using the bounds for separated sets as achieved in  Proposition~\ref{prop:seperated} with our choices of $\delta_i$:
\begin{equation}\label{eq:D-estimate}
    \begin{split}
    \sum_{i\geq 1}N\left(B_{N_i}^{\gamma,C'},2\delta_{i}\right)\cdot \left(4\cdot\delta_{i}\right)^{D} &\leq 4^{D}\cdot C_{\text{mix}}\cdot C^{-2} \cdot \sum_{i\geq 1}\delta_{i}^{-3}\cdot N_{i}^{2\gamma-2\alpha'}\cdot \delta_{i}^{D} \cdot \left\lVert f\right\rVert_{Lip}^{2} \\
    &\leq 4^{D}\cdot C_{\text{mix}} \cdot C^{-2}\cdot \lVert f\rVert_{Lip}^{2} \cdot \sum_{i\geq 1}N_{i}^{6d+5\gamma-2\alpha'-D(2d+\gamma)}.
    \end{split}
\end{equation}
If we constraint the number $D$ so that the exponent appearing in the series at ~\eqref{eq:D-estimate} is negative, the series will converge, as $N_{i}$ is a \emph{lacunary} sequence.
Therefore we require that 
\begin{equation*}
    6d+5\gamma-2\alpha'-D(2d+\gamma)<0,
\end{equation*}
or equivalently
\begin{equation*}
    D>3-\frac{2\alpha'-2\gamma}{2d+\gamma}.
\end{equation*}
In view of the definition of Hausdorff dimension and the discussion which follows Definition~\ref{defn:seperated-set}, the above estimate shows that
\begin{equation}\label{eq:hdim-bound-gamma-C}
    \dim_{H}(S(\theta,C')) \leq 3-\frac{2\alpha'-2\gamma}{2d+\gamma}.
\end{equation}\
Letting $\gamma$ approach $0$ along a sequence shows that any fixed $\theta$ and $C$, for $C'=C'(C,f,\left\{a_n\right\})$ as in Observation~\ref{obs:geometrical-estimate}, we get
\begin{equation}\label{eq:hdim-bound-C}
    \dim_{H}(S(\theta,C')) \leq 3-\frac{\alpha'}{d}.
\end{equation}
Now we let $C$ approach infinity along a sequence, the associated numbers $C'=C'(C,f,\left\{a_n\right\})$ tend to infinity as well, as can be seen by the choice of $C'$ in Observation~\ref{obs:geometrical-estimate}. Using a union bound for the Hausdorff dimensions of the sets $\left\{S(\theta,C')\right\}_{C'>0}$, we infer that for any fixed $\theta$ we have
\begin{equation*}
    \dim_{H}(S(\theta)) \leq 3-\frac{\alpha'}{d}.
\end{equation*}
Now we will let $\theta$ approach $0$ along a decreasing sequence, as by Bourgain's argument (as described in \S~\ref{sec:ergodic-theorem}), it is enough to show convergence of $A_{N}f(x)$ along the sub-sequences $A_{N_i}f(x)$.

Using the union bound for Hausdorff dimensions, by taking $\theta\to 0$ along a decreasing sequence, we see that
\begin{equation}\label{eq:hdim-estimat-final} 
\dim_{H}\left(\left\{x\in X\mid \overline{\lim}|A_{N}f(x)|>0 \right\}\right)\leq~3-~\frac{\alpha'}{d},
\end{equation}
finishing the proof of Theorem~\ref{thm:hdim-estimate} in the case of compact $X$.
When $X$ is not compact, we cover $X$ by countably many compact subsets $\left\{X_{i}\right\}_{i\in\mathbb{N}}$. For each such subset $X_{i}$ we construct a covering argument as indicated above, resulting in a bound of the form
\begin{equation*}
\dim_{H}\left(\left\{x\in X_{i}\mid \overline{\lim}|A_{N}f(x)|>0 \right\}\right)\leq~3-~\frac{\alpha'}{d}
\end{equation*}
and using the monotonicity property of the Hausdorff dimension - 
\begin{align*}
\dim_{H}\left(\left\{x\in X\mid \overline{\lim}|A_{N}f(x)|>0 \right\}\right) &= \dim_{H}\left(\cup_{i\in\mathbb{N}}\left\{x\in X_{i}\mid \overline{\lim}|A_{N}f(x)|>0 \right\}\right) \\ 
&\leq~3-~\frac{\alpha'}{d},
\end{align*} this concludes the proof of Theorem~\ref{thm:hdim-estimate} in the general case.

\begin{rem}
Examining the above estimates, one may see that we can recover a bound over the set of points for which the averages $\lvert A_{N}f(x) \rvert$ do not decay at a certain rate $C(f,\left\{a_n\right\})\cdot N^{-\gamma}$.
It is unknown to the author if there is any particular interest in those subsets (namely if the set of points with different convergence rates for the sparse averages, for the same function).
\end{rem}

Now we turn into proving Corollary~\ref{cor:decay-estimate}. We first show the following strengthening of Lemma~\ref{lem:quant-alpha-mixing} for the case of the average along polynomial trajectories.
Fix $p(x)\in~\mathbb{Z}[x]$ to be some non-constant polynomial of degree $d$, and define the operator 
\begin{equation*}
    A^{\text{poly}}_{N}f(x)=\frac{1}{N}\sum_{i=0}^{N-1}f\left(T^{p(i)}x\right).
\end{equation*}
\begin{lem}\label{lem:quant-mixing-horocycle}
Assume that $(X,T)$ is polynomially mixing for $f$ with rate $\alpha$, then for all $N$ large enough (depending on the polynomial $p$) we have that $$\| A^{\text{poly}}_{N}f\|_{L^{2}(X,\mu)} \leq C_{\text{mix}}\cdot N^{-\alpha''}\|f\|_{L^{2}(X,\mu)}$$ for $\alpha''=\frac{1}{2}\min\{1,d\cdot \alpha\}$, and $C_{\text{mix}}=C_{\text{mix}}(f,p)>0$ is some constant.
\end{lem}
\begin{proof}
The proof follows the computations demonstrated in Lemma~\ref{lem:quant-alpha-mixing}.
By similar computation as above, 
\begin{equation}\label{eq:poly-sample-mixing}
\| A^{poly}_{N}f\|_{L^{2}(X,\mu)}^{2} \leq \frac{\| f\|^{2}_{L^{2}(X,\mu)}}{N}+\frac{1}{N^2}\sum_{n,m=0,\ n>m}^{N}\left<T^{p(n)-p(m)}f,f\right>.
\end{equation}
As $p(n)$ is a polynomial of degree $d$, we have that $p(n)-p(n-k)\gg_{p}~k\cdot~n^{d-1}$ for any $1 \leq k<n$, as long as $n\geq O_{p}(1)$ for some constant $O_{p}(1)$ depending on the polynomial $p$. Using this estimate in the above expression yields - 
\begin{align*}
\| A^{\text{poly}}_{N}f\|_{L^{2}(X,\mu)}^{2} &\leq \frac{\| f\|^{2}_{L^{2}(X,\mu)}}{N}+\frac{2}{N^2}\sum_{n=1}^{N}\sum_{k=1}^{n-1}\left<T^{p(n)-p(n-k)}f,f\right> \\
&\leq \frac{\| f\|^{2}_{L^{2}(X,\mu)}}{N}+\frac{2}{N^2}\sum_{n= 1}^{O_{p}(1)}\sum_{k=1}^{n-1} \left<T^{p(n)-p(n-k)}f,f\right> \\
&\ + \frac{2}{N^2}\sum_{n= O_{p}(1)+1}^{N}\sum_{k=1}^{n-1} \left<T^{p(n)-p(n-k)}f,f\right>  \\ 
&\leq \frac{\| f\|^{2}_{L^{2}(X,\mu)}}{N}+\frac{2}{N^2}O_{p}(1)^2 \| f\|^{2}_{L^{2}(X,\mu)} \\
&\ +\frac{2}{N^2}\sum_{n=1}^{N}\sum_{k=1}^{n-1} C\cdot k^{-\alpha} n^{-\alpha\cdot(d-1)}\| f\|^{2}_{L^{2}(X,\mu)} \\ 
&\ll_{p} \frac{\| f\|^{2}_{L^{2}(X,\mu)}}{N}+\frac{C}{N^{2}}\sum_{n=1}^{N}n^{1-d\cdot\alpha}\| f\|^{2}_{L^{2}(X,\mu)} \\
&\ll_{p} 3\max\{1,C\}\cdot \left(N^{-1}+N^{-d\cdot\alpha}\right)\| f\|^{2}_{L^{2}(X,\mu)},
\end{align*}
for $C=C(f)$ as in Definition~\ref{defn:poly-mixing}.
\end{proof}

For the proof of Corollary~\ref{cor:decay-estimate}, we use an approximation argument combined with known estimates of decay of matrix coefficients.
\begin{proof}[Proof of Corollary~\ref{cor:decay-estimate}]
It is enough to deduce this estimate over a countable dense family of Lipschitz functions $f$, showing that for each such function $f$, the dimension of its exceptional set is bounded and concluding using the union bound for Hausdorff dimension.
Given $f$ a Lipschitz function of bounded support, we introduce a $K$-finite approximation $f^{N,\gamma}$ which satisfy $\|f-f^{N,\gamma}\|_{L^{\infty}} \leq N^{-\gamma}$ and $\lVert f^{N,\gamma} \rVert_{L^{2}} \leq \lVert f \rVert_{L^{2}}$.  
By examining the proof of Lemma~\ref{lem:lip-approx} we can obtain such a $K$-finite function $f^{N,\gamma}$ whose $K$-span subspace satisfies $\dim\left<K.f^{N,\gamma}\right>~\leq~ 2N^{\gamma+\epsilon}+~1$.

We have 
\begin{equation}\label{eq:lipschitz-rate}
\begin{split}
\mu\left\{x \in X \mid |A^{\text{poly}}_{N}f(x)| \geq \frac{2}{N^\gamma} \right\} & \leq \mu\left\{x \in X \mid \left\lvert A^{\text{poly}}_{N}f^{N,\gamma}(x)\right\rvert \geq \frac{1}{N^\gamma} \right\} \\
 & \leq C_{\text{mix}}\cdot N^{2\gamma}\left\|A^{\text{poly}}_{N}f^{N,\gamma}\right\|_{L^{2}(X,\mu)}^{2}.
\end{split}
\end{equation}
By using well-known bounds towards decay of matrix coefficients of \emph{K-finite vectors} cf. \cite[Equations (9.1),(9.4),(9.6)]{venkatesh2010sparse}, we have that
\begin{equation}
\left\lvert\left<u_{t}f,f\right>\right\rvert \leq \dim\left<K.f\right> \left(1+\lvert t\rvert\right)^{-\Re(s_1)}\|f\|_{L^{2}(m)}^{2}
\end{equation}
for every $K$-finite $f$, where $\lambda_{1}=s_{1}(1-s_{1})$ is the first non-trivial eigenvalue of the Laplacian of the locally symmetric space $K\backslash G /\Gamma$, considering $\Re\left(s_1\right)\leq 1/2$.
Hence in our notations of Lemma~\ref{lem:quant-mixing-horocycle}, we have a mixing rate estimate of
\begin{equation*}
    \alpha''=\min\left\{1/2,d\cdot\Re(s_1)\right\}
\end{equation*}
with a constant $C_{\text{mix}}=\sqrt{1+2\dim\left<K.f\right>}\leq\sqrt{3\dim\left<K.f\right>}$.
Therefore the estimate for $\mu\left\{x \in X \mid |A_{N}f^{N,\gamma}(x)| \geq \frac{C}{N^\gamma} \right\}$ given in Corollary~\ref{cor:estimate-good-set} takes the form of 

\begin{equation}\label{eq:smooth-conv-estimate}
\begin{split}
\mu\left\{x \in X \mid \left\lvert A^{\text{poly}}_{N}f^{N,\gamma}(x)\right\rvert \geq \frac{1}{N^\gamma} \right\} & \leq C^{-2}\cdot  N^{2\gamma}\cdot N^{-2\alpha''} \cdot 3\cdot \dim\left<K.f^{N,\gamma}\right>\cdot \|f^{N,\gamma}\|_{L^2(m)}^{2}\\
 & \leq 3\cdot C^{-2}\cdot N^{3\gamma+2\epsilon-2\alpha''}\|f\|_{L^2(X,\mu)}^{2}.
\end{split}
\end{equation}
Continuing verbatim as the proof of Theorem~\ref{thm:hdim-estimate}, we conclude that - 
\begin{equation}
\dim_{H}\left(\left\{x\in X\mid \overline{\lim}\lvert A^{\text{poly}}_{N}f(x)\rvert>0 \right\}\right) \leq 3-\frac{\alpha''}{d},
\end{equation}

where $\alpha''=(1/2)\min\{1,d\cdot\Re(s_1)\}$.
\end{proof}

Specifically, for the case of $SL_{2}(\mathbb{R})/SL_{2}(\mathbb{Z})$, and under the Selberg-Ramanujan conjecture for any homogeneous space which is given by a quotient with a principal congruence subgroup, we have the equality $\Re(s_{1})=1/2$. Using this data for the average along the squares, resulting in exceptional set of dimension less than $3-\frac{1}{4}=2.75$.
The best known bound today, due to Kim-Sarnak~\cite{kim2003refined}, amounts to $\Re(s_{1})\geq~1/2-7/64$. 
As for any $d>2$ this bound satisfies $d\cdot\Re{s_1} \geq 1$, we conclude that the bound for the exceptional set which amounts to the Selberg-Ramanujan bound is attained \emph{unconditionally} for polynomials of degree $3$ and higher. For quadratic polynomials, we have a bound of $3-\frac{25}{128}~=2.804\ldots$.

We now demonstrate a related result along the lines described above regarding averages over the primes in a unipotent orbit.
For $f\in L^{2}_{0}(X)$ a Lipschitz function, we denote 
\begin{equation*}
    P_{N}f(x_0)=\frac{1}{N}\sum_{n=1}^{N}f(u_{p_n}.x_0),
\end{equation*}
where $p_{n}$ denote the $n$'th prime number.
\begin{thm}
$\dim_{H}\left( \left\{x\in X \mid \overline{\lim} \lvert P_{N}f(x) \rvert >0\right\} \right) \leq 3-\sigma,$ where $\sigma=\frac{1}{2}\min\{1,\Re(s_1))\}$ for $s_1$ which satisfy $\lambda_{1}=s_{1}(1-s_{1})$, $\Re(s_{1})\leq 1/2$, where $\lambda_{1}$  is the first non-zero eigenvalue of the Laplacian on $X$.
\end{thm}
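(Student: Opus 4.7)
The plan is to mimic the proof of Theorem~\ref{thm:hdim-estimate} and Corollary~\ref{cor:decay-estimate}, with the polynomial sampling sequence $\{p(i)\}$ replaced by the primes $\{p_n\}$. The three ingredients I need are: (a) a polynomial $L^2$-decay estimate for $P_N f$; (b) a Lipschitz stability statement saying that at the correct scale, being $(N,\gamma)$-good survives small perturbations; (c) a volume-packing argument converting (a) and (b) into the box/Hausdorff dimension bound.

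For (a), first approximate $f$ by a $K$-finite function $f^{N,\gamma}$ as in Lemma~\ref{lem:lip-approx}, so that $\dim\langle K\cdot f^{N,\gamma}\rangle\lesssim N^{\gamma+\epsilon}$ and $\|f-f^{N,\gamma}\|_{\infty}\lesssim N^{-\gamma}$. Expanding exactly as in Lemma~\ref{lem:quant-mixing-horocycle} and plugging in the $K$-finite matrix-coefficient bound $|\langle u_t h,h\rangle|\lesssim\dim\langle K.h\rangle(1+|t|)^{-\Re(s_1)}\|h\|^{2}$ (cf.\ \cite[Equations (9.1),(9.4),(9.6)]{venkatesh2010sparse}) yields
\begin{equation*}
\|P_N f^{N,\gamma}\|_{L^{2}}^{2}\leq\frac{\|f\|^{2}}{N}+\frac{C\dim\langle K\cdot f^{N,\gamma}\rangle\|f\|^{2}}{N^{2}}\sum_{1\leq m<n\leq N}(p_n-p_m)^{-\Re(s_1)}.
\end{equation*}
The essential arithmetic input is the trivial lower bound $p_n-p_m\geq 2(n-m)$ for $n>m$ (using that prime gaps are $\geq 2$), which reduces the off-diagonal sum to $\sum_{1\leq m<n\leq N}(n-m)^{-\Re(s_1)}\lesssim N^{2-\min\{1,\Re(s_1)\}}$. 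Absorbing the $K$-dimension factor into $N^{\gamma+\epsilon}$ and applying Chebyshev gives $\mu\bigl(\{|P_N f|>2N^{-\gamma}\}\bigr)\lesssim N^{3\gamma+\epsilon-2\sigma}$ with $\sigma=\tfrac{1}{2}\min\{1,\Re(s_1)\}$.

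For (b), the $\mathrm{Ad}(u_{p_n})$-computation~\eqref{eq:adjoint-computation} applies verbatim with $p(t)$ replaced by $p_n$. Since $p_N\leq 2N\log N$, the dominant entry $p_n^{2}c$ is of size $\lesssim\delta\cdot N^{2+\epsilon}$, so the analogue of Observation~\ref{obs:geometrical-estimate} holds at scale $\delta\leq N^{-2-\gamma-\epsilon}$. For (c), we cover $X$ by $\sim\delta^{-3}=N^{6+3\gamma+3\epsilon}$ balls of radius $\delta$; those that meet the bad set $B^{\gamma}_{N}$ number at most $\delta^{-3}\cdot\mu(B^{\gamma}_N)\lesssim N^{6+6\gamma+\epsilon-2\sigma}$, giving
\begin{equation*}
\overline{\dim}_{box}\bigl(\{x:\limsup_N|P_N f(x)|N^{\gamma}>0\}\bigr)\leq 3-\frac{2\sigma-4\gamma-\epsilon}{2+\gamma+\epsilon}.
\end{equation*}
Sending $\gamma,\epsilon\to 0$ along a countable sequence and combining $\sigma$-stability of $\overline{\dim}_{p}$ with $\dim_{H}\leq\overline{\dim}_{p}$ produces $\dim_{H}\bigl(\{\limsup|P_N f|>0\}\bigr)\leq 3-\sigma$. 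The non-compact case is handled by the countable compact exhaustion as at the end of the proof of Theorem~\ref{thm:hdim-estimate}.

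The main obstacle, and the reason the theorem yields only $\sigma=\tfrac{1}{2}\min\{1,\Re(s_1)\}$ rather than something better, is the $L^2$ estimate in (a). In the polynomial sampling case Lemma~\ref{lem:quant-mixing-horocycle} exploited $p(n)-p(m)\geq(n-m)n^{d-1}$ to cash in the degree $d$, but the primes only satisfy the linear lower bound $p_n-p_m\geq 2(n-m)$, so the prime average effectively behaves like a degree-one polynomial average. Sharper inputs (Brun-type upper bounds on $\#\{(n,m):p_n-p_m=k\}$, or bilinear Vinogradov-type cancellation) would improve only logarithmic factors and do not affect the exponent, so the crude bound above is precisely what is needed for the stated conclusion.
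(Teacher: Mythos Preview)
Your argument is correct and actually takes a more elementary route than the paper for the $L^{2}$ estimate in step~(a). The paper rewrites the off-diagonal contribution as $\frac{1}{N^{2}}\sum_{k} d_{2N\log N}(k)\langle u_{k}f,f\rangle$ with $d_{M}(k)=\#\{p,p'\leq M:p-p'=k\}$, and then invokes the Bombieri--Davenport sieve bound $d_{M}(k)\ll M(\log\log M)/\log^{2}M$ to get $\|P_{N}f\|_{L^{2}}^{2}\ll N^{-\Re(s_{1})+\epsilon}$. You reach the same polynomial exponent from the elementary inequality $p_{n}-p_{m}\geq 2(n-m)$ (valid for $m\geq 2$, with only finitely many exceptions involving $p_{1}=2$, which you should mention but which are harmless). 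As you correctly diagnose in your final paragraph, the sieve input buys only logarithmic savings over your bound and does not affect the exponent $\sigma$; so the paper's appeal to Bombieri--Davenport is in fact unnecessary for the result as stated.

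Two minor points. First, your explicit $K$-finite approximation (following Corollary~\ref{cor:decay-estimate}) is more careful than what the paper writes here---the paper simply asserts $|\langle u_{k}f,f\rangle|\leq k^{-s}\|f\|^{2}$ for Lipschitz $f$ without passing through a $K$-finite truncation---so your treatment is cleaner on this front. Second, the constant in your box-dimension ratio $3-\frac{2\sigma-4\gamma-\epsilon}{2+\gamma+\epsilon}$ is off by one $\gamma$ in the numerator (it should be $3\gamma$ rather than $4\gamma$ if you track the exponents carefully), but this is irrelevant once $\gamma\to 0$.
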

\begin{proof}
By explicit computation we have - 
\begin{align}\label{eq:primes-mixing}
\begin{split}
    \|P_{N}f(x)\|_{L^{2}(X)}^{2} &=\frac{1}{N^{2}}\sum_{n,m=1}^{N}\left<u_{p_{n}}.f,u_{p_{m}}.f\right> \\
    &\leq \frac{\|f\|_{L^{2}(X)}^{2}}{N} +\frac{2}{N^2}\sum_{k=1}^{2N\log(N)}d_{2N\log(N)}(k)\left<u_{k}f,f\right>,
    \end{split}
\end{align}
where $d_{M}(k)=\lvert\left\{p_{1},p_{2} \leq M, p_{1},p_{2} \text{ primes} \mid k=p_{2}-p_{1} \right\}\rvert$.
By the Bombieri-Davenport theorem (\cite[Theorem~$3.11$]{halberstam1974sieve}), we have the following bound for every $M\in\mathbb{N}$:
\begin{align*}
    d_{M}(k) &\leq 8\prod_{p>2}\left(1-\frac{1}{(p-1)^{2}} \right) \prod_{2<p\mid M}\frac{p-1}{p-2} \frac{M}{\log^{2}(M)}\left(1+O\left(\frac{\log\log(M)}{\log(M)}\right)\right) \\
    &\leq 16 \log\log(3M) \frac{M}{\log^{2}(M)}.
\end{align*}
Therefore we can estimate the second summand in \eqref{eq:primes-mixing} as -
\begin{align*}
    \frac{2}{N^2}\sum_{k=1}^{2N\log(N)}d_{2N\log(N)}(k)\left<u_{k}f,f\right> &\leq \frac{64\log\log(6N\log(N))\log(N)}{N\log^{2}(2N\log(N))}\sum_{k=1}^{2N\log(N)}C\cdot k^{-s}\|f\|_{L^{2}(X)}^{2} \\
    &\ll_{f} N^{-s+\epsilon},
\end{align*}
where the dependence over $f$ is by a factor of its $L^2$ norm and the mixing constant $C_{\text{mix}}$ as in Definition~\ref{defn:poly-mixing}.
Define $\alpha'''=\frac{1}{2}\min\{1,s-\epsilon\}$, then the above computation shows $$\|P_{N}f\|_{L^{2}(X)} \leq~ C_{\text{mix}} \cdot N^{-\alpha'''}\|f\|_{L^{2}(X)}.$$
We define the set of $(N,\gamma)$-Good points with a constant $C$, $G_{N}^{\gamma,C}$ to be all the points $x\in~ X$ for which $\lvert P_{N}f(x) \rvert \leq C\cdot  N^{-\gamma}\left\lVert f \right\rVert_{Lip}$.
We have $\mu\left(G^{\gamma,C}_{N}\right) \geq 1-C^{-2}\cdot C_{\text{mix}}\cdot N^{2\gamma+\epsilon-2\alpha'''}\cdot\lVert f \rVert_{Lip}^{2}$.
Considering Observation~\ref{obs:geometrical-estimate}, together with the fact that $p_{n}\ll_{\epsilon}~n^{1+\epsilon}$ for every $\epsilon>0$, we deduce that for any $C$, there exists $C'=C'(C,f)>0$ such that if $dist(y,G^{\gamma,C}_{N})<N^{-2-2\epsilon-\gamma}$ then $y\in G^{\gamma,C'}_{N}$.
Using the argument as described above yields the following bound for the Hausdorff dimension 
\begin{equation*}
    \dim_{H}\left( \left\{x\in X \mid \overline{\lim} \lvert P_{N}f(x) \rvert >0\right\} \right) \leq 3-\alpha'''\leq 3-\frac{s}{2}+\epsilon.
\end{equation*}
As the above bound holds for any $\epsilon>0$, letting $\epsilon$ tend to zero along a decreasing sequence, we conclude the theorem.
\end{proof}

\section{Removing the dependence on the spectral gap}\label{sec:s-gap}
In this section we address the issue of achieving spectral-gap free estimates for the Hausdorff dimension of the exceptional set.
Unlike the previous sections, the results we present in this section are specialized for $G=SL_{2}$.
In the next section we show how to deduce a suitable theorem for general groups based upon this result. \

We begin by introducing some required background about representation theory of $SL_{2}(\mathbb{R})$ and $PGL_{2}(\mathbb{R})$.

\subsection{Background from representation theory and construction of a model}
The well-known classification of unitary irreducible representations of $G=SL_{2}(\mathbb{R})$ and $G=PGL_{2}(\mathbb{R})$ (\cite[Section $2.5$]{knapp2001representation}, \cite[Theorem $2.6.7$]{bump1997automorphic}) asserts that any such representation $\left(\rho,\mathcal{H}\right)$  belongs to one of the following classes:
\begin{enumerate}
	\item The trivial representation.
	\item Discrete series representation.
	\item Limits of discrete series representation.
	\item Principal series representation.
	\item Complementary series representation.
\end{enumerate}
Here the principal series representations are representations which are formed by parabolic induction of a unitary character of the Cartan subgroup, and complementary series representations are formed by parabolic induction of a non-unitary character of the Cartan subgroup.
The representations are indexed by their Casimir eigenvalue.
Amongst the representations, the only ones which admit a spherical vector are the trivial one, principal and complementary series representations.

We say that a representation $\left(\rho,\mathcal{H}\right)$ is $G$-\emph{tempered} if it is weakly contained in the left regular representation $L^{2}(G)$.
In practice in order to conclude whether a representation is tempered or not, it is enough to consider the integrability of matrix coefficients of $K$-finite vectors (c.f. \cite[Theorem $8.53$]{knapp2001representation}).
By the asymptotic bounds for matrix coefficients (\cite[Theorem $8.47.b$]{knapp2001representation} and \cite[equation (9.6)]{venkatesh2010sparse}), we conclude that the $G$-\emph{tempered} representations are the discrete series, limits of discrete series and principal series representations.
Hence for a tempered representation $\left(\rho,\mathcal{H}\right)$, the decay of its matrix coefficients is majorized by the Harish-Chandra bound.

For a lattice $\Gamma \leq G$ we view the space $L^{2}_{0}(G/\Gamma)$ as a $G$-representation space, we decompose the space as follows - $$L^{2}_{0}(G/\Gamma)=V_{\text{tempered}} \oplus V_{\text{non-tempered}},$$ where $V_{\text{tempered}}$ consists of all the $G$-tempered representations which are weakly-contained in $L^{2}_{0}(G/\Gamma)$, and $V_{\text{non-tempered}}$ is the ortho-complement space.
It is known that $V_{\text{non-tempered}}$ consists of finitely many summands, each of which is isomorphic to a complementary series representation with spectral parameter $s\in (0,1/2)$ where we write the Laplacian eigenvalue as $\lambda=s(1-s)$.
All the representations which occur in $V_{\text{tempered}}$ have a spectral parameter $s$ satisfying $\Re(s)=1/2$.
In view of the results from the previous section, for any smooth $f\in V_{\text{tempered}}$ we have a uniform bound on the Hausdorff dimension of the exceptional set.
So from now on, we will assume we are given a Lipschitz function $f \in V_{\text{non-tempered}}$.

The \emph{line model} (\cite[$\S 3.1$, Equation~$9$]{gelfand1968representation}) for principal series representation with spectral parameter $s$ is the space $L^{2}(\mathbb{R},\text{Lebesgue})$, equipped with the action - 
\begin{equation}\label{eq:line-model-action}
\begin{pmatrix}
	a & b \\
	c & d
\end{pmatrix} . f(x) = \lvert -cx+a \rvert^{-2s} f\left(\frac{dx-b}{-cx+a} \right).
\end{equation}
In order to define a line model for the complementary series representation with spectral parameter $s$, we define the following bi-linear form on the space $L^{1}_{Loc}(\mathbb{R})$ - 
\begin{equation}\label{eq:inner-product-complementary}
\langle f_{1},f_{2} \rangle = \int_{x=-\infty}^{\infty} f_{1}(x) \int_{y=-\infty}^{\infty}\overline{f_{2}(y)}\lvert x-y \rvert^{2s-2}dydx.
\end{equation}
While it is not evident from the definition of this bilinear form, this is an hermitian form (\cite[$\S 3.2$, Equation~$3$]{gelfand1968representation}, and completing it to a Hilbert space gives rise to an irreducible unitary $G$-representation isomorphic to the complementary series representation of parameter $s$ equipped with the action given in \eqref{eq:line-model-action}.

A closely related model to the line model is the Kirillov model, given by doing a Fourier transform on the function space.
The resulting inner product in the complementary series representation case is given by - 
\begin{equation}\label{eq:inner-product-complementary-Kirillov}
\langle f_{1},f_{2} \rangle = \int_{-\infty}^{\infty} \hat{f_{1}}(t)\overline{\hat{f_{2}}(t)}\lvert t \rvert^{1-2s}dt.
\end{equation}

As the $U$-action in the line model is given by translation, in the Kirillov model the action will be given by a multiplier, namely  
\begin{equation*}
u_{n}.\hat{f}(t)=e^{2\pi i n t}\hat{f}(t).
\end{equation*}

We caution the reader that our spectral parameter $s$ is different from the one taken in \cite{gelfand1968representation}, as we write the Laplacian eigenvalues differently.

\subsection{Bounding the exceptional set via estimates on exponential sums and oscillatory integrals}
In the previous section, the method to bound the dimension of the exceptional set relied on two components - effective mixing estimate and a geometric estimate arising from polynomial divergence.
As the mixing rate depends on the specific lattice and its spectral gap, we will change the operator whose norm we are bounding.
For a given non-constant polynomial $p(x)\in\mathbb{Z}[x]$ of degree $d$, we define the following operator -
\begin{equation*}
    B_{N}f(x)=\frac{1}{N}\sum_{n=0}^{N-1}f(u_{p(n)}.x)-\frac{1}{N}\int_{0}^{N}f(u_{p(n)}.x)dn
\end{equation*} for a function $f\in L^{2}_{0}(G/\Gamma)$.
For the \emph{continuous time} average, equidistribution theorems applies (c.f. Lemma~\ref{lem:dani-smillie-by-parts}, and \cite[Corollary $1.1$]{shah1994limit}) for every \emph{non $U$-periodic} point $x\in G/\Gamma$ we have that 
\begin{equation*}
\frac{1}{N}\int_{0}^{N}f(u_{p(n)}.x)dn \to 0,
\end{equation*}
as $N$ tends to infinity.
Moreover, one can get a quantitative version of the equidistribution theorem which will decay in a rate related to the spectral mixing rate (cf. \cite{burger1990horocycle} Theorem $2$, \cite{strombergsson2013deviation} Theorem $1$).
One may view the operator $B_{N}$ defined above as an operator which bounds a second term rate for the discrete average, assuming that the main term of the discrete average decays like the continuous average.
We will show an estimate related to Corollary~\ref{cor:estimate-good-set} for the operator $B_{N}$ which is independent of the spectral gap $s_{0}(\Gamma)$.
As the geometrical estimate given in Observation~\ref{obs:geometrical-estimate} works also for comparing the integral averages of two nearby points $\frac{1}{N}\int_{0}^{N}f(u_{p(n)}.x)dn, \frac{1}{N}\int_{0}^{N}f(u_{p(n)}.y)dn$, the rest of the packing argument will work exactly the same.

We remark here that by the explicit spectral resolution of $L^{2}(G/\Gamma)$ obtained by Selberg, every homogeneous space $X=G/\Gamma$ has a spectral gap $s_{0}=s_{0}(\Gamma)>0$. This spectral parameter $s_{0}$ will appear in two forms in the proceeding discussion - first - as a parameter which controls the decay rate of certain functions appearing in complementary series representations which are weakly contained in $L^{2}(X)$ and second - as a normalizing factor for those functions. As we are interested in polynomial estimates for $\|B_{N}f\|_{L^{2}(X)}$, we will show that we can let $s_{0}$ tend to $0$ in the decay rate, while keeping the normalizing factor constant (as this constant will disappear in the computation of the actual Hausdorff dimension, and for a given lattice $\Gamma$, this constant is a fixed non-zero number).

We begin with several auxiliary lemmas regarding estimates of exponential sums, oscillatory integrals and Fourier transforms of basis vectors of a given $SL_{2}$-representation $V_{s}$ from the complementary series.
\begin{lem}\label{lem:mean-value}
Fix some $\alpha>0$. For every $N>0$ and $\lvert t \rvert \leq N^{-((d-1)+\alpha)}$ we have
\begin{equation*}
\left\lvert \frac{1}{N}\sum_{n=0}^{N-1}e^{2\pi i p(n) t}-\frac{1}{N}\int_{0}^{N}e^{2\pi i p(n) t}dn \right\rvert \leq O\left(N^{-\alpha}\right).
\end{equation*} 
\end{lem}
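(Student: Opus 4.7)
The claim is a quantitative comparison of a Riemann sum to its integral for the oscillatory function $g(n) = e^{2\pi i p(n) t}$. The plan is to bound the difference by a one-variable mean-value argument: on each unit interval $[n, n+1]$ compare $g(n)$ to $g(x)$ using $|g(n) - g(x)| \le \int_n^{n+1} |g'(y)| \, dy$, and then sum over $0 \le n \le N-1$.

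First I would write
\begin{equation*}
\sum_{n=0}^{N-1} g(n) - \int_0^N g(x)\, dx = \sum_{n=0}^{N-1} \int_n^{n+1} \bigl(g(n) - g(x)\bigr)\, dx,
\end{equation*}
and bound the right-hand side in absolute value by $\int_0^N |g'(y)|\, dy$. Then I would differentiate: $g'(y) = 2\pi i\, t\, p'(y)\, e^{2\pi i p(y) t}$, so $|g'(y)| \le 2\pi |t|\, |p'(y)|$. Since $p$ has degree $d$ with integer coefficients, $p'$ has degree $d-1$, so there is a constant $C = C(p)$ with $|p'(y)| \le C(1+y)^{d-1}$ on $[0,N]$. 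Integrating gives $\int_0^N |g'(y)|\, dy \le 2\pi |t| \cdot C' N^d$ for some constant $C' = C'(p)$.

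Dividing by $N$ and substituting the hypothesis $|t| \le N^{-((d-1)+\alpha)}$, the estimate becomes
\begin{equation*}
\left| \frac{1}{N}\sum_{n=0}^{N-1} g(n) - \frac{1}{N}\int_0^N g(x)\, dx \right| \le 2\pi C' |t|\, N^{d-1} \le 2\pi C'\, N^{-\alpha},
\end{equation*}
which is the desired $O(N^{-\alpha})$ bound. No step should present any real obstacle — this is a one-line mean value estimate combined with the trivial growth bound $|p'(y)| \ll N^{d-1}$ on $[0,N]$; the only thing to keep honest is that the implicit constant depends on $p$ (through its leading coefficient and degree) but not on $t$ or $N$, which is consistent with the $O$-notation in the statement.
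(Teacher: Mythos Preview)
Your proof is correct and takes essentially the same approach as the paper: both compare each summand $e^{2\pi i p(n)t}$ to the integral over $[n,n+1]$ via the mean value theorem, obtaining a bound of the form $|t|\cdot O_p(N^{d-1})$ for the whole expression. The only cosmetic difference is that the paper bounds each term by $\max_{h\in[0,1]}|p'(n+h)|$ and sums, while you integrate $|g'|$ over $[0,N]$ directly; the resulting estimates are identical.
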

\begin{proof} 
We have the following bound - 
\begin{equation*}
\left\lvert e^{2\pi i p(n)t}-\int_{h=0}^{1}e^{2\pi i p(n+h)t}dh \right\rvert~\leq \max_{h\in \left[0,1\right]}\left\lvert e^{2\pi i p(n)t}-e^{2\pi i p(n+h)t} \right\rvert,
\end{equation*}
using the mean-value theorem we can estimate this difference by \\  $4\pi \lvert t\rvert\cdot\max_{h\in \left[0,1\right]}\lvert p'(n+h)\rvert$.
Rewriting the expression above as 
\begin{equation*}
\left|\frac{1}{N}\sum_{n=0}^{N-1}\left(e^{2\pi i p(n)t}-\int_{h=0}^{1}e^{2\pi i p(n+h)t}dh\right) \right|,
\end{equation*}
we deduce the following bound - 
\begin{align*}
\left|\frac{1}{N}\sum_{n=0}^{N-1}\left(e^{2\pi i p(n)t}-\int_{h=0}^{1}e^{2\pi i p(n+h)t}dh \right)\right| &\leq \frac{4\pi \lvert t\rvert }{N}\sum_{n=0}^{N-1}\max_{h\in \left[0,1\right]}\lvert p'(n+h)\rvert \\ 
&= 4\pi \lvert t \rvert \cdot O_{p}\left(N^{d-1}\right).
\end{align*}
Choosing $t$ accordingly, we get the desired bound.
\end{proof}

\begin{lem}\label{lem:stationary-phase}
For every non-zero $t$ we have $\frac{1}{N}\int_{0}^{N} e^{2\pi i p(n)t}dn = O_{p}\left(\frac{1}{N|t|^{1/d}}\right)$.
\end{lem}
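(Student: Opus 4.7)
The plan is to recognize this as a standard van der Corput-type oscillatory integral estimate with a polynomial phase of degree $d$, and to apply the classical van der Corput lemma in its $d$-th derivative form. Write $p(x) = a_d x^d + a_{d-1} x^{d-1} + \cdots + a_0$ with $a_d \neq 0$, so that the phase $\phi(n) := 2\pi p(n) t$ satisfies
\begin{equation*}
\phi^{(d)}(n) = 2\pi \, d! \, a_d \, t,
\end{equation*}
which is a nonzero constant of absolute value $2\pi \, d! \, |a_d| \cdot |t|$, independent of $n$.

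The van der Corput lemma (see, e.g., Stein's \emph{Harmonic Analysis}, Ch.~VIII, Prop.~2) states that if $\phi \in C^{d}([a,b])$ satisfies $|\phi^{(d)}(n)| \geq \lambda > 0$ throughout $[a,b]$, then
\begin{equation*}
\left| \int_{a}^{b} e^{i \phi(n)} \, dn \right| \leq C_d \, \lambda^{-1/d},
\end{equation*}
with $C_d$ an absolute constant depending only on $d$ (and no monotonicity hypothesis on $\phi'$ is required when $d \geq 2$; for $d = 1$ the standard proof via integration by parts works directly since $p'$ is a polynomial with a bounded number of monotonicity intervals). Applied with $\lambda = 2\pi \, d! \, |a_d| \cdot |t|$ and $[a,b] = [0,N]$, this immediately yields
\begin{equation*}
\left| \int_{0}^{N} e^{2 \pi i p(n) t} \, dn \right| \leq C_d \bigl( 2 \pi \, d! \, |a_d| \bigr)^{-1/d} \, |t|^{-1/d} = O_p\bigl( |t|^{-1/d} \bigr).
\end{equation*}
Dividing by $N$ gives the stated estimate.

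I do not expect any serious obstacle: the only points of care are (i) noting that the bound on $\phi^{(d)}$ is uniform in $n$ because the $d$-th derivative of a degree-$d$ polynomial is constant, and (ii) for the case $d = 1$, treating the oscillatory integral separately (it equals $\tfrac{1}{2\pi i a_1 t}\bigl(e^{2\pi i p(N) t} - e^{2\pi i p(0) t}\bigr)$, whose modulus is $\leq (\pi |a_1| |t|)^{-1}$). Both of these are immediate, so the lemma reduces to quoting van der Corput.
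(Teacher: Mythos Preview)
Your proof is correct and follows essentially the same approach as the paper: both invoke the van der Corput lemma for oscillatory integrals, using that the $d$-th derivative of the phase $2\pi p(n)t$ is the nonzero constant $2\pi\, d!\, a_d\, t$, to obtain $\int_0^N e^{2\pi i p(n)t}\,dn = O_p(|t|^{-1/d})$ and then divide by $N$. The paper cites Iwaniec--Kowalski rather than Stein and omits your explicit treatment of the $d=1$ case, but the argument is otherwise identical.
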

\begin{proof}
Fixing $N$, from the fact that $p^{(d)}(n)\equiv const \neq 0$ and the van-der-Corput lemma for the method of stationary-phase (c.f. \cite[Lemma $8.2$]{iwaniec2004analytic}) we deduce that $\int_{0}^{N} e^{2\pi i p(n)t}dn = O_{p}\left(\lvert t\rvert^{-1/d}\right)$, hence the claim follows.
\end{proof}

\begin{cor}
For any $\lvert t \rvert \geq N^{-((d-1)+\alpha)}$ we have $\frac{1}{N}\int_{0}^{N} e^{2\pi i p(n)t}dn = O\left(N^{-\frac{1-\alpha}{d}}\right)$.
\end{cor}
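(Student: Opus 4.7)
The plan is to obtain this Corollary as an immediate substitution into Lemma~\ref{lem:stationary-phase}, so there is essentially no new content beyond bookkeeping of the exponents. The only ``step'' is to plug the hypothesis $|t|\geq N^{-((d-1)+\alpha)}$ into the bound $O_p\left(\frac{1}{N|t|^{1/d}}\right)$ supplied by the previous lemma, and check that the result simplifies to $N^{-(1-\alpha)/d}$.

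Concretely, I would first note that the map $|t|\mapsto \frac{1}{N|t|^{1/d}}$ is monotone decreasing in $|t|$, so the worst case in the range $|t|\geq N^{-((d-1)+\alpha)}$ is attained at the lower endpoint. Taking $|t|=N^{-((d-1)+\alpha)}$ gives $|t|^{1/d}=N^{-((d-1)+\alpha)/d}$, hence
\begin{equation*}
\frac{1}{N|t|^{1/d}}\leq \frac{1}{N}\cdot N^{((d-1)+\alpha)/d}=N^{-1+\frac{d-1+\alpha}{d}}=N^{-\frac{1-\alpha}{d}},
\end{equation*}
which is exactly the claimed bound. Combining this with Lemma~\ref{lem:stationary-phase} finishes the proof.

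There is no real obstacle here; the Corollary is just the previous van der Corput estimate rephrased in the regime $|t|\geq N^{-((d-1)+\alpha)}$ which is the complementary regime to the one handled by Lemma~\ref{lem:mean-value}. The purpose of stating it in this form, I expect, is that together with Lemma~\ref{lem:mean-value} it produces a uniform polynomial decay estimate for $\frac{1}{N}\sum_{n=0}^{N-1}e^{2\pi i p(n)t}$ on \emph{all} $t$ (the ``low-frequency'' range handled by mean-value approximation against the integral, and the ``high-frequency'' range handled by stationary phase), which is precisely the input needed to bound $\|B_Nf\|_{L^2(X)}$ in the Kirillov model via the weighted inner product \eqref{eq:inner-product-complementary-Kirillov}. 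Accordingly, in the proof I would also briefly record the optimal choice of $\alpha$ by balancing the two estimates, so that the two regimes match and yield a polynomial bound independent of the spectral gap $s(\Gamma)$.
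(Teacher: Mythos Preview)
Your proposal is correct and matches the paper's own treatment: the paper states this Corollary immediately after Lemma~\ref{lem:stationary-phase} with no proof, precisely because it is just the substitution $|t|\geq N^{-((d-1)+\alpha)}$ into the bound $O_p\!\left(\frac{1}{N|t|^{1/d}}\right)$ that you carry out. Your additional remarks about balancing $\alpha$ are not part of the Corollary itself but correctly anticipate how it is used in the proof of Theorem~\ref{thm:mixing-est-s-gap-free}.
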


We would need the following estimates regarding the Fourier transform of basis vectors in the line model of complementary series - 
\begin{prop}
    Let $f_0(x)=\frac{1}{\left(x^2+1 \right)^{s}}$ be the spherical basis vector for a given $SL_{2}$-representation of spectral parameter $0<s<\frac{1}{2}$.
    Then we have the following expression for its Fourier transform -  $$\hat{f_0}(t)=\frac{\pi^{1/2}}{\Gamma(s)2^{s-1/2}}\cdot t^{s-1/2}K_{s-1/2}(t),$$ where $K_{v}(t)$ stands for the modified Bessel function which defined as
    \begin{equation*} 
    K_{v}(t)=~C(I_{v}(t)+~I_{-v}(t)).
    \end{equation*}
\end{prop}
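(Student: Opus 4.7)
The plan is to reduce the Fourier transform of $f_0(x)=(x^2+1)^{-s}$ to a classical integral representation of the modified Bessel function by means of the Gamma-integral/Gaussian trick. Concretely, I would begin from the elementary identity
\begin{equation*}
\frac{1}{(x^2+1)^s}=\frac{1}{\Gamma(s)}\int_{0}^{\infty}u^{s-1}e^{-u(x^2+1)}\,du,
\end{equation*}
valid for $\Re(s)>0$, which rewrites $f_0$ as a superposition of Gaussians.

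Next, I would substitute this into the Fourier integral and interchange the order of integration. For $\Re(s)>1/2$ the function $f_0$ lies in $L^1(\mathbb{R})$ and the resulting double integral is absolutely convergent, so Fubini is legitimate; after evaluating the inner Gaussian integral via $\int_{\mathbb{R}} e^{-ux^{2}-itx}\,dx=\sqrt{\pi/u}\,e^{-t^{2}/(4u)}$ one arrives at
\begin{equation*}
\hat{f_0}(t)=\frac{\sqrt{\pi}}{\Gamma(s)}\int_{0}^{\infty}u^{s-3/2}\,e^{-u-t^{2}/(4u)}\,du.
\end{equation*}
At this point I would invoke the standard Schläfli–type representation $\int_{0}^{\infty}u^{\nu-1}e^{-u-a/u}\,du=2a^{\nu/2}K_{\nu}(2\sqrt{a})$ with $\nu=s-1/2$ and $a=t^{2}/4$, which collapses the remaining integral into the Macdonald function $K_{s-1/2}(|t|)$ and yields the claimed closed-form expression (up to the explicit constants stated, which I would simply match by unwinding the $2^{\nu/2}$ and $\sqrt{\pi}/\Gamma(s)$ factors).

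Finally, to extend the identity to the range $0<s<1/2$ relevant for the complementary series, I would argue by analytic continuation in $s$: the left-hand side, interpreted as a tempered distribution on $\mathbb{R}$, depends holomorphically on $s$ in a strip containing $(0,\tfrac{1}{2})\cup(\tfrac{1}{2},\infty)$, while the right-hand side is a bona fide locally integrable function (near $t=0$ one has $t^{s-1/2}K_{s-1/2}(t)\sim c\,t^{2s-1}$, which is locally integrable for $s>0$, and the function decays exponentially at infinity). Since the two expressions coincide on the half-plane $\Re(s)>1/2$, they coincide throughout, giving the formula for all $0<s<1/2$.

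The only real obstacle is the analytic-continuation/distributional bookkeeping in the final step, since for $0<s<1/2$ the function $f_0$ is not integrable and Fubini cannot be applied directly; everything else is a routine identification of classical special-function integrals, and the overall constant is then fixed by comparing the $\Gamma(s)$, $2^{\nu/2}$ and $\sqrt{\pi}$ factors produced by the Gaussian and Bessel representations.
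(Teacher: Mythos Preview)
Your proposal is correct but takes a different route from the paper. The paper's argument is a one-line citation: it writes $\hat{f_0}(t)=2\int_{0}^{\infty}\frac{\cos(xt)}{(x^2+1)^{s}}\,dx$ and then quotes Basset's integral representation of the Macdonald function (Watson, \S6.16),
\[
K_{\nu}(t)=\frac{\Gamma(\nu+\tfrac12)\,2^{\nu}}{\pi^{1/2}\,t^{\nu}}\int_{0}^{\infty}\frac{\cos(xt)}{(x^2+1)^{\nu+1/2}}\,dx,
\]
with $\nu=s-\tfrac12$, which immediately yields the claimed formula. Since Basset's integral is valid (as a conditionally convergent oscillatory integral) for all $\Re(\nu)>-\tfrac12$, the paper's identity holds directly for every $s>0$ with no analytic continuation needed. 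Your Gamma-integral/Gaussian subordination followed by the Schl\"afli representation $\int_{0}^{\infty}u^{\nu-1}e^{-u-a/u}\,du=2a^{\nu/2}K_{\nu}(2\sqrt{a})$ is an equally standard and entirely valid derivation; it is more self-contained in that it builds the Bessel identity from scratch rather than looking it up, but the cost is the extra step of analytically continuing from $\Re(s)>\tfrac12$ down to the complementary-series range $0<s<\tfrac12$. Both arguments are really just recognizing the same classical integral; the paper's version simply invokes a form of it whose domain of validity already covers the parameter range of interest.
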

\begin{proof}
    By the formula for Fourier transform we have - 
    \begin{equation*} \hat{f_0}(t)=\int_{-\infty}^{\infty} \frac{e^{2\pi i xt}}{\left(x^2+1 \right)^{s}}dx=2\int_{x=0}^{\infty}\frac{\cos(xt)}{(x^2+1)^{s}}dx. \end{equation*}
    Using Basset's integral representation of the modified Bessel function (c.f. \cite[$\S6.16$, Equation~$1$]{watson1995treatise})
    \begin{equation}\label{eq:basset-integral} 
    K_{\nu}(t) = \frac{\Gamma\left(\nu+\frac{1}{2}\right)2^{\nu}}{\pi^{\frac{1}{2}}t^{\nu}}\int_{x=0}^{\infty}\frac{\cos(xt)}{(x^2+1)^{\nu+\frac{1}{2}}}dx,
    \end{equation}
    we get $\hat{f_0}(t)=\frac{\pi^{1/2}}{\Gamma(s)2^{s-1/2}}\cdot t^{s-1/2}K_{s-1/2}(t)$.
\end{proof}
Using the explicit series expansion of the Bessel function \cite[$\S3.7$, Equations $2,6$]{watson1995treatise} we have the following estimate - 
\begin{lem}[Asymptotics of spherical function near $0$]\label{lem:bessel-estimate}
    For $0<t < 1$, 
    \begin{equation*}
        \lvert \hat{f_0}(t) \rvert \lesssim_{s} t^{2s-1}.
    \end{equation*}
\end{lem}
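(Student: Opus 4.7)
The plan is to deduce the bound directly from the closed-form expression $\hat{f_0}(t) = \frac{\pi^{1/2}}{\Gamma(s) 2^{s-1/2}} t^{s-1/2} K_{s-1/2}(t)$ obtained in the previous proposition, by inserting the well-known asymptotic of the modified Bessel function $K_\nu$ at the origin.

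First, since $0<s<1/2$, the index $\nu = s-1/2$ is strictly negative. I would use the symmetry $K_\nu = K_{-\nu}$ to rewrite $K_{s-1/2}(t) = K_{1/2-s}(t)$, so that the index $\mu := 1/2-s$ lies in the open interval $(0,1/2)$. This is the configuration in which the standard small-argument asymptotic $K_\mu(t) \sim \tfrac{1}{2}\Gamma(\mu)\, (t/2)^{-\mu}$ (valid for $\mu>0$, and readable off the two series expansions in Watson $\S 3.7$ cited just before the lemma) applies cleanly, without any logarithmic correction (the logarithm appears only at $\mu=0$, which is precisely the excluded endpoint $s=1/2$).

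Substituting this asymptotic into the explicit formula yields, for $0<t<1$,
\begin{equation*}
|\hat{f_0}(t)| \;\leq\; \frac{\pi^{1/2}}{\Gamma(s)\, 2^{s-1/2}}\; t^{s-1/2}\; \bigl(C_s + o(1)\bigr)\, t^{-(1/2-s)} \;=\; C'_s\, t^{2s-1}\bigl(1+o(1)\bigr),
\end{equation*}
where the exponents add as $(s-1/2) + (s-1/2) = 2s-1$, and $C'_s$ absorbs $\Gamma(1/2-s)$ together with numerical factors. This is exactly the stated bound $|\hat{f_0}(t)| \lesssim_s t^{2s-1}$, with the implicit constant depending on $s$ through $\Gamma(s)$ and $\Gamma(1/2-s)$.

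There is essentially no obstacle here beyond bookkeeping; the only point requiring care is the dependence of the implicit constant on $s$ as $s \to 1/2^-$, since $\Gamma(1/2-s)$ blows up at that endpoint. This is acceptable because the lemma is only used in the regime where $s$ stays bounded away from $1/2$ (the spectral parameters of the finitely many complementary series components of $L^2(G/\Gamma)$, for the fixed lattice $\Gamma$). If desired, one could also verify the estimate more elementarily by splitting Basset's integral \eqref{eq:basset-integral} at $x=1/t$: on $[0,1/t]$ the cosine contributes $O(1/t)$ against the bounded integrand, and on $[1/t,\infty)$ the denominator $(x^2+1)^{\mu+1/2}$ gives an integrable tail, yielding the same $t^{-\mu}$ behavior and hence the same $t^{2s-1}$ bound after multiplying by $t^{s-1/2}$.
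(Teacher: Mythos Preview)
Your proposal is correct and follows essentially the same route as the paper, which simply cites the series expansion of $K_\nu$ from Watson \S3.7 and reads off the small-$t$ behaviour; you have merely spelled out the details (the symmetry $K_{s-1/2}=K_{1/2-s}$ and the asymptotic $K_\mu(t)\sim \tfrac12\Gamma(\mu)(t/2)^{-\mu}$) that the paper leaves implicit. Your remark on the $s$-dependence of the constant and the alternative via splitting Basset's integral are both sound supplementary observations.
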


Furthermore, using the asymptotics for the Bessel function near $\infty$ (c.f. \cite[$7.23$, Equation $1$]{watson1995treatise} we infer the following - 
\begin{lem}[Asymptotics of spherical function near $\infty$]\label{lem:bessel-estimate-infty}
For $t\gg 1$, 
\begin{equation*}
    \lvert \hat{f}_{0}(t) \rvert \ll t^{s-1}e^{-t}.
\end{equation*}
\end{lem}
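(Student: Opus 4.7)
The plan is to reduce the lemma to a direct application of the classical large-argument asymptotic expansion of the modified Bessel function of the second kind. By the proposition just established, we have the closed-form expression
\begin{equation*}
\hat{f}_{0}(t) = \frac{\pi^{1/2}}{\Gamma(s)\, 2^{s-1/2}} \cdot t^{s-1/2} K_{s-1/2}(t),
\end{equation*}
so the behavior of $\hat{f}_{0}$ at infinity is entirely controlled by the behavior of $K_{s-1/2}(t)$ at infinity.

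The classical asymptotic expansion (Watson, \S7.23, Equation 1, already cited as the reference for this lemma) states that for every fixed real order $\nu$,
\begin{equation*}
K_{\nu}(t) = \sqrt{\frac{\pi}{2t}}\, e^{-t}\bigl(1 + O_{\nu}(t^{-1})\bigr) \quad \text{as } t \to \infty.
\end{equation*}
I would quote this directly, noting that the hypothesis $0 < s < 1/2$ ensures $\nu = s - 1/2$ lies in a fixed compact range where the implicit constant is uniform.

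Substituting this expansion with $\nu = s - 1/2$ into the formula for $\hat{f}_{0}(t)$ gives
\begin{equation*}
\hat{f}_{0}(t) = \frac{\pi^{1/2}}{\Gamma(s)\, 2^{s-1/2}}\, t^{s-1/2} \cdot \sqrt{\frac{\pi}{2t}}\, e^{-t}\bigl(1 + O(t^{-1})\bigr) = \frac{\pi}{\Gamma(s)\, 2^{s}}\, t^{s-1}\, e^{-t}\bigl(1 + O(t^{-1})\bigr),
\end{equation*}
which is precisely the claimed estimate $\hat{f}_{0}(t) \approx t^{s-1} e^{-t}$ for $t \gg 1$, with an explicit constant depending only on $s$. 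There is essentially no obstacle: the only nontrivial input is the Bessel asymptotic, which is a textbook fact, and the rest is algebraic simplification of the prefactor. If one wished to be fully self-contained, the Bessel asymptotic itself can be derived either from the integral representation \eqref{eq:basset-integral} via Laplace's method around the critical point of the phase, or from the series expansion of $I_{\pm\nu}$ combined with the identity $K_{\nu} = \frac{\pi}{2\sin(\nu\pi)}(I_{-\nu} - I_{\nu})$; but since the paper freely cites Watson for the complementary estimate near zero, citing it here is consistent.
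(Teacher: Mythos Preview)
Your proposal is correct and follows exactly the same approach as the paper: the paper does not give a separate proof but simply cites the large-argument asymptotic for $K_{\nu}$ from Watson \S7.23, Equation~1, and reads off the result from the closed-form expression for $\hat{f}_{0}$. You have merely spelled out the substitution and simplification of the prefactor, which the paper leaves implicit.
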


Moreover, we obtain similar estimates for any other basis vector as well, informally such a result follows from the fact that the decay rate at infinity of all the other basis vectors are the same as the spherical vectors, hence the asymptotics near zero of their Fourier transforms behave the same -
\begin{lem}\label{lem:extended-bessel-estimate}
Let $n$ be an integer and $f_{n}$ be the weight $n$ vector in the complementary series representation $V_{s}$, for $0<t<1$ we have $\lvert \hat{f_{n}}(t) \rvert \lesssim t^{2s-1}$.
\end{lem}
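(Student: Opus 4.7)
The plan is to reduce the weight-$n$ estimate to the spherical case just handled by expressing $f_n$ as an explicit polynomial multiple of $(1+x^2)^{-s-n}$ and taking derivatives in the Fourier variable. Since $f_n$ is a weight-$n$ eigenvector of the maximal compact subgroup $K$ in the line model and the unimodular Cayley factor $(1-ix)/(1+ix)$ diagonalises this $K$-action, we may take $f_n(x)=\bigl((1-ix)/(1+ix)\bigr)^n(1+x^2)^{-s}=(1-ix)^{2n}(1+x^2)^{-s-n}$. Expanding the polynomial $(1-ix)^{2n}$ binomially and applying the standard Fourier identity $\widehat{x^k h}(t)=(2\pi i)^{-k}\hat{h}^{(k)}(t)$ yields
\[
\hat{f}_n(t)=\sum_{k=0}^{2n}\binom{2n}{k}(-2\pi)^{-k}\,g^{(k)}(t),\qquad g(t):=\widehat{(1+x^2)^{-s-n}}(t).
\]

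Next I would apply Basset's integral representation \eqref{eq:basset-integral} in exactly the same manner as in the proof of the preceding proposition, but with $s$ replaced by $s+n$, to identify $g(t)=c_{s,n}\,t^{s+n-1/2}K_{s+n-1/2}(2\pi t)$ for some explicit nonzero constant $c_{s,n}$. Using the standard two-branch Frobenius expansion of $K_\nu$ at the origin, which contains no logarithmic corrections because $\nu=s+n-1/2$ is not a non-negative integer when $s\in(0,1/2)$, one obtains a decomposition $g(t)=h_{\mathrm{an}}(t)+a_1\,t^{2s+2n-1}+(\text{higher order})$, where $h_{\mathrm{an}}$ is analytic at $0$ and $a_1\neq 0$.

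Finally I would differentiate this expansion termwise. The analytic summand $h_{\mathrm{an}}$ has derivatives of every order that remain bounded on $(0,1)$; the monomial $t^{2s+2n-1}$, after $k$ differentiations, becomes a nonzero scalar multiple of $t^{2s+2n-1-k}$, which for $k<2n$ has strictly positive exponent and therefore stays bounded near $0$, while the extreme case $k=2n$ produces the singular term $t^{2s-1}$. Collecting contributions, the coefficient of $t^{2s-1}$ in $\hat{f}_n(t)$ equals $(2\pi)^{-2n}\cdot a_1\cdot\Gamma(2s+2n)/\Gamma(2s)$, and all three factors are nonzero for $s\in(0,1/2)$. Summing everything, $\hat{f}_n(t)=O(1)+C\cdot t^{2s-1}$, which proves $|\hat{f}_n(t)|\lesssim t^{2s-1}$ on $(0,1)$.

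The main technical point lies in the second step: extracting the non-analytic Frobenius branch $t^{2s+2n-1}$ of $K_{s+n-1/2}$ with a nonzero coefficient and verifying that no cancellation across the binomial sum annihilates the $t^{2s-1}$ term. The former is standard for non-integer Bessel order, and the latter is automatic since the $k=2n$ summand is the \emph{only} term in the sum capable of producing an exponent as low as $2s-1$.
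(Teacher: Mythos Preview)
Your argument is correct and follows the same route as the paper: both rewrite $f_n$ as a degree-$2|n|$ polynomial times $(1+x^2)^{-s-|n|}$, identify the Fourier transform of the latter via Basset's integral as a multiple of $t^{|n|+s-1/2}K_{|n|+s-1/2}$, and then differentiate $2|n|$ times --- the paper via the Bessel recursion $\tfrac{d}{dt}\bigl(t^\nu K_\nu\bigr)=-t^\nu K_{\nu-1}$, you via termwise differentiation of the two-branch Frobenius series. Two small remarks: your expansion $(1-ix)^{2n}$ silently assumes $n\ge 0$ (the paper makes the sign-of-$n$ split explicit, and the other sign is handled symmetrically by $(1+ix)^{2|n|}$), and your nonvanishing check on the $t^{2s-1}$ coefficient, while correct, is unnecessary for the stated upper bound since $t^{2s-1}\to\infty$ as $t\to 0^+$ already dominates the $O(1)$ remainder.
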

\begin{proof}

The formula for vector of weight $n$ in $V_{s}$ is given by - 
\begin{equation}\label{eq:basis-vector}
    f_{n}(x)=\left(\frac{x-i}{x+i}\right)^{n}\cdot\frac{1}{\left(x^2+1\right)^{s}}.
\end{equation}
Rewriting this expression as either $\frac{\left(x-i\right)^{2n}}{\left(x^2+1\right)^{n+s}}$ or $\frac{\left(x+i\right)^{2n}}{\left(x^2+1\right)^{n+s}}$, according to the sign of $n$, we compute the following formula for the Fourier transform, based on Basset's integral representation 
\begin{equation}\label{eq:fourier-trans-basis-vector}
    \hat{f_{n}}(t)=\frac{\pi^{1/2}}{\Gamma(2n+s)2^{2n+s-1/2}}\left(\frac{d}{dt}\pm i\right)^{2n}\cdot t^{n+s-1/2}K_{n+s-1/2}(t).
\end{equation}
By the connection formulas for the derivative of the Bessel function (\cite[$\S3.71$, Equations $5,6$]{watson1995treatise}) we have 
\begin{equation*}
    \left(\frac{1}{t}\frac{d}{dt}\right)^{m}t^{v}K_{v}(t)=(-1)^{m}t^{v-m}K_{v-m}(t), 
\end{equation*}
therefore we have 
\begin{equation*}
    \frac{d}{dt}P(t)t^{v}K_{v}(t) = P'(t)t^{v}K_{v}(t)-P(t)t^{v}K_{v-1}(t)
\end{equation*}
for any polynomial $P(t)$. 
Using the asymptotics of the Bessel functions $K_{v}(t)$ and the explicit expression for the Fourier transform \eqref{eq:fourier-trans-basis-vector} we see that $$t^{n+s-1/2}K_{n+s-1/2-2n}(t) \lesssim_{s}~t^{2s-1}$$ and the other terms in the expression are of powers higher or equal to $2s-1$, hence the claim follows.
\end{proof}

Using the expansion derived in the proof of the previous lemma, we obtain the following estimate
\begin{lem}\label{lem:extended-bessel-estimate-infty}
For $t\gg 1$,
\begin{equation*}
    \lvert \hat{f_{n}}(t) \rvert \ll_{n} t^{\lvert n \rvert+s-1}e^{-t}.
\end{equation*}
\end{lem}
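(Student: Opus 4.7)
The plan is to combine the explicit expression for $\hat{f_n}(t)$ obtained in the proof of Lemma~\ref{lem:extended-bessel-estimate}, namely
\begin{equation*}
    \hat{f_{n}}(t)=\frac{\pi^{1/2}}{\Gamma(2n+s)2^{2n+s-1/2}}\left(\frac{d}{dt}\pm i\right)^{2n}\cdot t^{n+s-1/2}K_{n+s-1/2}(t),
\end{equation*}
with the classical large-$t$ asymptotics of the modified Bessel function, $K_{\nu}(t) \sim \sqrt{\pi/(2t)}\,e^{-t}$ as $t\to\infty$, which holds uniformly for $\nu$ in any bounded interval (see \cite[$\S 7.23$]{watson1995treatise}). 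As a first step, this immediately gives $t^{n+s-1/2} K_{n+s-1/2}(t) \sim \sqrt{\pi/2}\, t^{n+s-1}e^{-t}$, handling the scalar part of the formula.

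Next, I would expand the differential operator $\left(\frac{d}{dt}\pm i\right)^{2n}$ by the binomial theorem. Each $\pm i$ factor merely multiplies by a constant, while each $d/dt$ can be tracked by iterating the recurrence $\frac{d}{dt}\bigl(t^{\nu} K_{\nu}(t)\bigr) = -t^{\nu}K_{\nu-1}(t)$ from \cite[$\S 3.71$]{watson1995treatise}; this lowers the Bessel index but preserves the asymptotic shape $t^{\mathrm{const}}e^{-t}$. Combining all binomial contributions and keeping the dominant one yields a constant (depending on $n$ and $s$) times $t^{|n|+s-1}e^{-t}$ for the case $n\geq 0$. For $n<0$, the analogous computation starting from the conjugate form of the Basset representation produces the same shape, with $|n|$ replacing $n$, which accounts for the absolute value in the statement.

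The main obstacle I anticipate is verifying that no hidden cancellation reduces the leading-order power of $t$. The expansion of $\left(\frac{d}{dt}\pm i\right)^{2n}$ contributes $2n+1$ complex-coefficient terms of comparable polynomial order, and in principle the weighted sum could vanish at the top. The cleanest way to avoid this is to arrange the expansion so that the highest surviving power of $t$ comes from differentiating the factor $e^{-t}$ all $2n$ times (producing $(-1)^{2n}=1$ times $e^{-t}$ to leading order), while the $\pm i$ contributions appear only in strictly lower polynomial powers of $t$; no cancellation can then occur at the top order. Once this is in hand, the remaining polynomial corrections are of order $O(t^{|n|+s-2}e^{-t})$ and are subdominant, establishing the claimed asymptotic $|\hat{f_n}(t)|\approx t^{|n|+s-1}e^{-t}$.
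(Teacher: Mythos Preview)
Your overall strategy matches the paper's (very brief) justification: combine the explicit formula from Lemma~\ref{lem:extended-bessel-estimate} with the uniform large-$t$ asymptotics $K_\nu(t)\sim\sqrt{\pi/(2t)}\,e^{-t}$. That part is fine.

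However, your proposed resolution of the cancellation issue is incorrect. Multiplication by $\pm i$ does not lower the power of $t$ at all, and differentiation also preserves the leading polynomial power: for $g(t)\sim t^{a}e^{-t}$ one has $g'(t)=(at^{a-1}-t^{a})e^{-t}\sim -t^{a}e^{-t}$. Hence \emph{every} term in the binomial expansion of $\bigl(\tfrac{d}{dt}\pm i\bigr)^{2|n|}$ applied to $t^{|n|+s-1/2}K_{|n|+s-1/2}(t)$ contributes at the same top order $t^{|n|+s-1}e^{-t}$, not just the pure-derivative term. The correct way to rule out cancellation is to sum these leading contributions: the top-order coefficient is
\[
\sum_{k=0}^{2|n|}\binom{2|n|}{k}(\pm i)^{2|n|-k}(-1)^{k}=(-1\pm i)^{2|n|},
\]
which has modulus $2^{|n|}\neq 0$. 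With this correction your argument goes through and yields $|\hat f_n(t)|\approx t^{|n|+s-1}e^{-t}$ as claimed.
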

We note here that although the dependence in $n$ is inherent, we would need to consider only $K$-finite functions in the proof, namely functions for which $\hat{f}_{n}\equiv 0$ for $\lvert n \rvert \gg 0$, leading to uniformity of the estimates in the $n$-aspect. 

We would also need some results regarding moments of exponential sums.
\begin{defn}
We say that a polynomial $p\in\mathbb{Z}[x]$ of degree $d$ has \emph{$q$-moment cancellation with level $\ell$} if the following inequality holds for all $\epsilon>0$,
\begin{equation}
    \int_{0}^{1} \left\lvert \frac{\sum_{n=0}^{N-1}e^{2\pi i p(n)t}}{N} \right\rvert^{q}dt \ll_{\epsilon} N^{-(\ell-\epsilon)}.
\end{equation}
\end{defn}
The following lemma follows immediately from the definition.
\begin{lem}[Moment bound]\label{lem:hua} Let $p\in\mathbb{Z}[x]$ be a polynomial which has $q$-moment cancellation with level $\ell$, then for every $\beta>0,\epsilon>0$ we have - 
\begin{equation*}
\int_{0}^{N^\beta} \left\lvert\frac{\sum_{n=0}^{N-1}e^{2\pi i p(n)t}}{N} \right\rvert^{q}dt \ll_{\epsilon} N^{-\ell+\beta+\epsilon}.
\end{equation*}
\end{lem}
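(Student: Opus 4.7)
The plan is to exploit periodicity in $t$. The key observation is that $p(n) \in \mathbb{Z}$ for every $n \in \mathbb{Z}$, so for each fixed $n$ one has
\begin{equation*}
e^{2\pi i p(n)(t+1)} = e^{2\pi i p(n) t}\cdot e^{2\pi i p(n)} = e^{2\pi i p(n) t},
\end{equation*}
which means the function $S_N(t) := \frac{1}{N}\sum_{n=0}^{N-1} e^{2\pi i p(n) t}$ is $1$-periodic in $t$, and hence so is $|S_N(t)|^q$.

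Given this periodicity, the plan is simply to partition the interval $[0, N^\beta]$ into $\lceil N^\beta \rceil$ subintervals each of length at most $1$. On each unit translate $[k,k+1]$, a change of variables together with the periodicity reduces the integral of $|S_N(t)|^q$ to the integral $\int_0^1 |S_N(t)|^q dt$, which is controlled by the hypothesis of $q$-moment cancellation with level $\ell$.

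More concretely, I would write
\begin{equation*}
\int_0^{N^\beta} |S_N(t)|^q\, dt \le \lceil N^\beta \rceil \cdot \int_0^1 |S_N(t)|^q\, dt,
\end{equation*}
and then invoke the definition of $q$-moment cancellation, which bounds the right-hand integral by $O_\epsilon(N^{-\ell + \epsilon})$ for every $\epsilon > 0$. Multiplying by $\lceil N^\beta \rceil \le N^\beta + 1 \ll N^\beta$ yields the desired estimate $\ll_\epsilon N^{-\ell + \beta + \epsilon}$.

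There is essentially no obstacle here — the lemma is a direct consequence of integer-valuedness of $p$ on $\mathbb{Z}$ (giving the periodicity) and the hypothesized $q$-moment cancellation on a single period. The only minor point worth a sentence in the write-up is to justify that the leftover fractional subinterval at the end contributes at most one additional copy of $\int_0^1 |S_N(t)|^q dt$, which is immediate since $|S_N(t)|^q \ge 0$.
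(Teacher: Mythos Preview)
Your proof is correct and matches the paper's approach: the paper simply states that the lemma ``follows immediately from the definition,'' and the periodicity argument you spell out (using $p(n)\in\mathbb{Z}$ to reduce the integral over $[0,N^\beta]$ to $\lceil N^\beta\rceil$ copies of the integral over $[0,1]$) is exactly the implicit one-line justification.
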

We would be interested in small values of $q$ for which a given polynomial $p$ has $q$-moment cancellation with level strictly larger than $\deg(p)-1$.

Hua's bound~\cite{hua1938waring} shows that any integer  polynomial of degree $d$ has $2^{d}$-moment cancellation with level $d$.
Recently Bourgain\cite[Theorem~$10$]{bourgain2016vinogradov} improved upon Hua's bound and obtained that any monomial $p(x)=~x^{d}$ has $d(d+1)$-moment cancellation with level $d$.

Now we are ready to present the main estimate for this subsection.
Consider the set of $K$-finite functions, as in Definition~\ref{defn:K-finite}. 
Those functions can be thought of ``generalized trigonometric polynomials''.
In a unitary $G$-representation, the set of $K$-finite vectors form a dense subspace, as a consequence of the Peter-Weyl theorem (c.f. \cite[Theorem~$1.12$]{knapp2001representation}).
We refer the reader to~\cite[Section \S8.2]{knapp2001representation} for more background about the role of $K$-finite functions in representation theory of semisimple Lie groups.
\begin{thm}\label{thm:mixing-est-s-gap-free}
Assume that $f$ is a K-finite function which belongs to a complementary series representation of parameter $s$ occurring in $L^{2}_{0}(G/\Gamma)$ with $\|f\|_{L^{2}(G/\Gamma)}=1$, then we have $\|B_{N}f\|_{L^{2}(G/\Gamma)} \ll_{f} N^{-\delta}$ for some $\delta>0$ which is independent of $s$. In particular one may take any $\delta<\frac{1}{2q+1}$ where the polynomial $p(x)$ has $2q$-moment cancellation with level $d$.
\end{thm}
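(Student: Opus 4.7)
The plan is to diagonalize $B_N$ via the Kirillov model of the complementary series representation $V_s$. In this model $u_n$ acts on $\hat{f}(t)$ by multiplication by $e^{2\pi i n t}$, so $B_N$ is multiplication by
$$M_N(t) := S_N(t) - I_N(t), \quad S_N(t) := \tfrac{1}{N}\sum_{n=0}^{N-1}e^{2\pi i p(n) t}, \quad I_N(t) := \tfrac{1}{N}\int_{0}^{N} e^{2\pi i p(n) t}\,dn.$$
Setting $g_0(t) := |\hat{f}(t)|^2|t|^{1-2s}$, the inner product \eqref{eq:inner-product-complementary-Kirillov} gives
$$\|B_N f\|^2 = \int_{\mathbb{R}} |M_N(t)|^2\, g_0(t)\, dt,$$
and the normalization $\|f\|=1$ makes $g_0\,dt$ a probability measure. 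Since $f$ is $K$-finite, Lemmas~\ref{lem:extended-bessel-estimate} and \ref{lem:extended-bessel-estimate-infty} apply to each weight-component of $f$ with implied constants depending only on $f$.

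I would split $\mathbb{R}$ into three regions: $R_1 := \{|t| \le L\}$ with $L := N^{-(d-1)-\alpha}$, $R_3 := \{|t| \ge C\log N\}$, and $R_2$ the middle band, with $\alpha \in (0,1)$ to be optimized. On $R_1$, Lemma~\ref{lem:mean-value} supplies $|M_N(t)| \lesssim N^{-\alpha}$, so the $R_1$-contribution is $\lesssim N^{-2\alpha}$. On $R_3$, the exponential decay $g_0(t) \lesssim_f t^{2n_0-1}e^{-2t}$ from Lemma~\ref{lem:extended-bessel-estimate-infty} (where $n_0$ is the largest weight appearing in $f$) makes the $R_3$-contribution $\lesssim_f N^{-C'}$ for $C'$ arbitrarily large, since $|M_N|$ is trivially bounded.

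The main work is on $R_2$, where I would use $|M_N|^2 \le 2|S_N|^2 + 2|I_N|^2$. For the $I_N$ piece, the stationary-phase estimate (Lemma~\ref{lem:stationary-phase}) gives $|I_N(t)| \lesssim N^{-(1-\alpha)/d}$ uniformly on $R_2$, yielding a contribution $\lesssim N^{-2(1-\alpha)/d}$. For the $S_N$ piece, I would pass to the probability measure $\nu := Z^{-1} g_0 \mathbf{1}_{R_2}\,dt$ with $Z := \int_{R_2} g_0 \le 1$, and apply Jensen's inequality to get $\|S_N\|^2_{L^2(\nu)} \le \|S_N\|^2_{L^{2q}(\nu)}$; the latter expands to $Z^{-1}\int_{R_2}|S_N|^{2q} g_0\,dt \le Z^{-1}(\sup_{R_2} g_0)\int_{R_2}|S_N|^{2q}\,dt$. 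Lemma~\ref{lem:extended-bessel-estimate} gives the $s$-uniform bound $\sup_{R_2} g_0 \lesssim_f L^{2s-1} \le L^{-1} = N^{(d-1)+\alpha}$, while the $1$-periodicity of $S_N$ in $t$ combined with the $2q$-moment cancellation hypothesis (applied to $[0,1]$ and tiled) yields $\int_{R_2}|S_N|^{2q}\,dt \lesssim_{\epsilon} (\log N)\, N^{-d+\epsilon}$. Putting it together: $\int_{R_2}|M_N|^2 g_0\,dt \lesssim_f N^{-(1-\alpha)/q + \epsilon}$.

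Optimizing via $2\alpha = (1-\alpha)/q$ yields $\alpha = 1/(2q+1)$ and $\|B_N f\|^2 \lesssim_f N^{-2/(2q+1)+\epsilon}$, so $\|B_N f\| \lesssim_f N^{-\delta}$ for any $\delta < 1/(2q+1)$, as claimed. The main subtlety is the $s$-uniformity: while the Bessel-function estimates carry $s$-dependent multiplicative constants (absorbed into the $f$-dependence), the exponents in the $N$-dependence must cancel cleanly between $|\hat{f}|^2$ and $|t|^{1-2s}$ in $g_0$. The critical observation enabling this is that $L^{2s} \le 1$ uniformly for $s \in (0,\tfrac12)$, so the bound $\sup_{R_2} g_0 \lesssim N^{(d-1)+\alpha}$ holds with the same exponent regardless of where $s$ lies in the complementary series strip; verifying this cancellation with honest constant-tracking through the Bessel asymptotics is the main technical obstacle.
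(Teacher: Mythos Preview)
Your approach is correct and essentially identical to the paper's: both diagonalize $B_N$ in the Kirillov model, split the $t$-integral into the same three regions, use Lemma~\ref{lem:mean-value} on the inner region, exponential decay of $\hat f$ on the outer region, and on the middle band separate $S_N$ and $I_N$, dispatch $I_N$ by stationary phase, and control the $S_N$ piece via a H\"older-type step against the $2q$-moment hypothesis. Your Jensen-on-$\nu$ followed by the sup-bound $\sup_{R_2}g_0\lesssim L^{2s-1}\le L^{-1}$ is the same mechanism as the paper's direct H\"older on Lebesgue measure followed by $\bigl(\int_{R_2} g_0^{q'}\bigr)^{1/q'}\lesssim L^{-1/q}$; both produce the factor $N^{((d-1)+\alpha)/q}$ from the weight, with the $s$-uniformity entering exactly as you identify.

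One slip to correct: what you call ``$\|S_N\|^2_{L^{2q}(\nu)}$ expands to $Z^{-1}\int_{R_2}|S_N|^{2q}g_0\,dt$'' is actually $\|S_N\|^{2q}_{L^{2q}(\nu)}$; the missing $1/q$-th power is needed, and once restored the chain reads
\[
\int_{R_2}|S_N|^2 g_0\,dt \;\le\; Z^{1-1/q}\,(\sup_{R_2}g_0)^{1/q}\Bigl(\int_{R_2}|S_N|^{2q}\,dt\Bigr)^{1/q}\;\lesssim\; N^{-(1-\alpha-\epsilon)/q},
\]
which is exactly the bound you state. The final optimization $\alpha=1/(2q+1)$ and the conclusion $\delta<1/(2q+1)$ are correct.
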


\begin{proof}[Proof of Theorem~\ref{thm:mixing-est-s-gap-free}]
As $f$ is $K$-finite, we can write $f$ in a $K$-spherical Fourier decomposition 
\begin{equation*}
    f(x)=\sum_{n=-L}^{L}a_{n}f_{n}(x)
\end{equation*} for some fixed $L>0$.

Using the Kirillov model for complementary series representation, we have the following expression for $\|B_{N}f\|_{L^{2}(G/\Gamma)}$
\begin{equation}\label{eq:integral-before-uncertainty}
\|B_{N}f\|^{2}_{L^{2}(G/\Gamma)} = \int_{-\infty}^{\infty} \left\lvert \frac{1}{N}\sum_{n=0}^{N-1}e^{2\pi i p(n) t}-\frac{1}{N}\int_{0}^{N}e^{2\pi i p(n) t}dn \right\rvert^{2} \lvert \hat{f}\rvert^{2} \lvert t \rvert^{1-2s}dt.
\end{equation}
Let $\beta>0$ be a positive number to be determined later.
We split the integral as follows:

\begin{align}\label{eq:integral-after-uncertainty}
\begin{split}
\|B_{N}f\|^{2}_{L^{2}(G/\Gamma)} & = \int_{-N^{\beta}}^{N^{\beta}} \left\lvert \frac{1}{N}\sum_{n=0}^{N-1}e^{2\pi i p(n) t}-\frac{1}{N}\int_{0}^{N}e^{2\pi i p(n) t}dn \right\rvert^{2} \lvert \hat{f}\rvert^{2} \lvert t \rvert^{1-2s}dt \\ 
& + \int_{\lvert t \rvert \geq N^{\beta}}\left\lvert \frac{1}{N}\sum_{n=0}^{N-1}e^{2\pi i p(n) t}-\frac{1}{N}\int_{0}^{N}e^{2\pi i p(n) t}dn \right\rvert^{2} \lvert \hat{f}\rvert^{2} \lvert t \rvert^{1-2s}dt.
\end{split}
\end{align}

We further refine the dissection of the first term:
\begin{flalign}
\begin{split}
&\int_{-N^{\beta}}^{N^{\beta}} \left\lvert \frac{1}{N}\sum_{n=0}^{N-1}e^{2\pi i p(n) t}-\frac{1}{N}\int_{0}^{N}e^{2\pi i p(n) t}dn \right\rvert^{2} \lvert \hat{f}\rvert^{2} \lvert t \rvert^{1-2s}dt \\
&=\int_{\lvert t \lvert \leq N^{-((d-1)+\alpha)}}\left\lvert \frac{1}{N}\sum_{n=0}^{N-1}e^{2\pi i p(n) t}-\frac{1}{N}\int_{0}^{N}e^{2\pi i p(n) t}dn \right\rvert^{2} \lvert \hat{f}\rvert^{2} \lvert t \rvert^{1-2s}dt \\
&+\int_{N^{-((d-1)+\alpha)}\leq \lvert t \rvert \leq N^{\beta}} \left\lvert\frac{1}{N}\sum_{n=0}^{N-1}e^{2\pi i p(n) t}-\frac{1}{N}\int_{0}^{N}e^{2\pi i p(n) t}dn \right\rvert^{2} \lvert \hat{f}\rvert^{2} \lvert t \rvert^{1-2s}dt.
\end{split}
\end{flalign}

For the first summand, using Lemma~\ref{lem:mean-value} we get 
\begin{equation}\label{eq:integral-first-first}
\int_{\lvert t \rvert \leq N^{-((d-1)+\alpha)}}\left\lvert \frac{1}{N}\sum_{n=0}^{N-1}e^{2\pi i p(n) t}-\frac{1}{N}\int_{0}^{N}e^{2\pi i p(n) t}dn \right\rvert^{2} \lvert \hat{f}\rvert^{2} \lvert t \rvert^{1-2s}dt \leq N^{-2\alpha}\|f\|^{2}_{L^{2}(m)}.
\end{equation}
For the second summand, we first use a trivial bound

\begin{align*}
&\int_{N^{-((d-1)+\alpha)}\leq \lvert t \rvert \leq N^{\beta}} \left\lvert \frac{1}{N}\sum_{n=0}^{N-1}e^{2\pi i p(n) t}-\frac{1}{N}\int_{0}^{N}e^{2\pi i p(n) t}dn \right\rvert^{2} \lvert \hat{f}\rvert^{2} \lvert t \rvert^{1-2s}dt \\
&\leq 2\int_{N^{-((d-1)+\alpha)}\leq \lvert t \rvert \leq N^{\beta}} 
\left\lvert \frac{1}{N}\sum_{n=0}^{N-1}e^{2\pi i p(n) t}\right\rvert^2\lvert \hat{f}\rvert^{2} \lvert t \rvert^{1-2s}dt \\ 
&+ 2\int_{N^{-((d-1)+\alpha)}\leq \lvert t \rvert \leq N^{\beta}} \left\lvert \frac{1}{N}\int_{0}^{N}e^{2\pi i p(n) t}dn\right\rvert^2\lvert\hat{f}\rvert^{2} \lvert t \rvert^{1-2s}dt.
\end{align*}
Using Lemma~\ref{lem:stationary-phase} for the later summand, we infer
\begin{align}\label{eq:integral-first-second-second}
\begin{split}
&\int_{N^{-((d-1)+\alpha)}\leq \lvert t \rvert \leq N^{\beta}} \left\lvert \frac{1}{N}\sum_{n=0}^{N-1}e^{2\pi i p(n) t}-\frac{1}{N}\int_{0}^{N}e^{2\pi i p(n) t}dn \right\rvert^{2} \lvert \hat{f}\rvert^{2} \lvert t \rvert^{1-2s}dt  \\
&\ll_{\gamma,\beta}\int_{N^{-((d-1)+\alpha)}\leq \lvert t \rvert \leq N^{\beta}} \left\lvert \frac{1}{N}\sum_{n=0}^{N-1}e^{2\pi i p(n) t}\right\rvert^{2} \lvert \hat{f}\rvert^{2} \lvert t \rvert^{1-2s}dt +O\left(N^{-\frac{1-\alpha}{d}}\right) \|f\|^{2}_{L^{2}(m)}.
\end{split}
\end{align}
Choose $q>1$ such that $p(x)$ has $2q$-moment cancellation with level $d$, and let $q'$ be its Holder conjugate.
Using Holder's inequality

\begin{align}
\begin{split}
    \int_{N^{-((d-1)+\alpha)}\leq \lvert t \rvert \leq N^{\beta}} \left\lvert \frac{1}{N}\sum_{n=0}^{N-1}e^{2\pi i p(n) t}\right\rvert^{2} \lvert \hat{f}\rvert^{2} \lvert t \rvert^{1-2s}dt \\
    \leq \left(\int_{N^{-((d-1)+\alpha)}\leq \lvert t \rvert \leq N^\beta} \left\lvert \frac{1}{N}\sum_{n=0}^{N-1}e^{2\pi i p(n) t}\right\rvert^{2q}dt\right)^{1/q} &\left(\int_{N^{-((d-1)+\alpha)}\leq \lvert t \rvert \leq N^{\beta}} \lvert \hat{f}\rvert^{2q'} \lvert t \rvert^{q'(1-2s)}dt\right)^{1/q'}.
\end{split}
\end{align}

By explicit computation using Lemma~\ref{lem:bessel-estimate} and Lemma~\ref{lem:extended-bessel-estimate} we have for $f=f_{n}$ a given basis vector, for $0<t<1$ 

\begin{equation*}
    \lvert \hat{f}(t) \rvert^{2q'} \ll_{s} \lvert t\rvert^{2q'(2s-1)},
\end{equation*}
and for $t>1$
\begin{equation*}
    \lvert \hat{f}(t) \rvert^{2q'} \ll_{s} \lvert t\rvert^{2q'(L+s-1)}e^{-2q't},
\end{equation*}
therefore

\begin{flalign*}
\int_{N^{-((d-1)+\alpha)}\leq \lvert t \rvert \leq N^{\beta}} \lvert \hat{f}\rvert^{2q'} \lvert t \rvert^{q'(1-2s)}dt 
& \ll_{f} \int_{N^{-((d-1)+\alpha)}\leq \lvert t \rvert \leq 1} \lvert t^{2s-1}\rvert^{2q'} \lvert t \rvert^{q'(1-2s)}dt \\
&+ \int_{1\leq \lvert t \rvert \leq N^{\beta}} \lvert t^{L+s-1}e^{-t}\rvert^{2q'} \lvert t \rvert^{q'(1-2s)}dt \\ 
& = \int_{N^{-((d-1)+\alpha)}\leq \lvert t \rvert \leq 1}t^{q'(2s-1)}dt + \int_{1\leq \lvert t \rvert \leq N^{\beta}}t^{2q'L+q}e^{-2q't}dt \\
& \leq N^{((d-1)+\alpha)(q'(1-2s)-1)}+\left(2q\right)^{-(2q'L+q'+1)}\Gamma(2q'L+q'+1).
\end{flalign*}
Assuming that $N^\beta$ is much larger than $L$ 
the above computations lead to a bound of the form

\begin{align}
\begin{split}
\left(\int_{N^{-((d-1)+\alpha)}\leq \lvert t \rvert \leq N^{\beta}} \lvert \hat{f}\rvert^{2q'} \lvert t \rvert^{q'(1-2s)}dt\right)^{1/q'} 
&\ll N^{\left((d-1)+\alpha\right)(q'-1)/q'}\\
&=N^{\left((d-1)+\alpha\right)/q}.
\end{split}
\end{align}
Combining the above with Lemma~\ref{lem:hua} we can conclude the following bound:
\begin{equation}\label{eq:integral-first-second}
\begin{split}
\int_{N^{-((d-1)+\alpha)}\leq \lvert t \rvert \leq N^{\beta}} \left\lvert \frac{1}{N}\sum_{n=0}^{N-1}e^{2\pi i p(n) t}\right\rvert^{2} \lvert \hat{f}\rvert^{2} \lvert t \rvert^{(1-2s)}dt &\ll
N^{-\frac{d-\epsilon-\beta}{q}+\frac{(d-1)+\alpha}{q}} \\
&= N^{-\frac{1-\alpha-\epsilon}{q}}.
\end{split}
\end{equation}

For the remaining integral in~\eqref{eq:integral-after-uncertainty},
we use the asymptotics derived in Lemma~\ref{lem:bessel-estimate-infty} and Lemma~\ref{lem:extended-bessel-estimate-infty},
and the assumption we are given basis vectors with weights in $\left[-L,L\right]$ to deduce - 
\begin{align}\label{eq:integral-second}
\begin{split}
&\int_{\lvert t\rvert \geq N^{\beta}}\left\lvert \frac{1}{N}\sum_{n=0}^{N-1}e^{2\pi i p(n)t}-\frac{1}{N} \int_{0}^{N}e^{2\pi i p(n)t}dn \right\rvert^{2}\lvert \hat{f}(t)\rvert^{2}\lvert t \rvert^{1-2s}dt 
\\
&\leq 2\int_{\lvert t\rvert \geq N^{\beta}}\lvert \hat{f}(t)\rvert^{2}\lvert t \rvert^{1-2s}dt \\
&\leq 2\int_{\lvert t\rvert \geq N^{\beta}}\lvert \sum_{i=-L}^{L} \alpha_{i}\hat{f}_{i}\rvert^{2}\lvert t \rvert^{1-2s}dt \\
&\leq 4\int_{\lvert t\rvert \geq N^{\beta}}\sum_{i=-L}^{L}\lvert\alpha_{i}\rvert^{2}\lvert \hat{f}_{i}(t)\rvert^{2}\lvert t \rvert^{1-2s}dt \\
&\leq 4\int_{\lvert t\rvert \geq N^{\beta}}\sum_{i=-L}^{L}\lvert\alpha_{i}\rvert^{2}\lvert t\rvert^{2(L+s-1)} \lvert t \rvert^{1-2s}dt \\
&= 4\int_{\lvert t\rvert \geq N^{\beta}}\lvert t\rvert^{2L-1}e^{-2\lvert t\rvert}dt \cdot \|f\|_{L^2(m)}^{2}\\
&=2^{2L}\Gamma\left(2L-1,2N^{\beta}\right)\cdot \|f\|_{L^2(m)}^{2},
\end{split}
\end{align}
where $\Gamma(s,x)$ stands for the incomplete Gamma function $$\Gamma(s,x) = \int_{x}^{\infty}t^{s-1}e^{-t}dt.$$ 
Using the asymptotics $\Gamma(s,x)\approx x^{s-1}e^{-x}$ for $x\to \infty$, we deduce that the integral is bounded by $2^{2L}(2N^\beta)^{2L-1}e^{-N^{\beta}}\cdot \|f\|_{L^2(m)}^{2} \leq N^{4L\beta}e^{-N^{\beta}}\cdot~\|f\|_{L^2(m)}^{2}$. \\

Combining \eqref{eq:integral-after-uncertainty},\eqref{eq:integral-first-first},\eqref{eq:integral-first-second} and \eqref{eq:integral-second} we conclude
\begin{align}\label{eq:final-s-free-estimate}
\begin{split}
\|B_{N}(f)\|^{2}_{L^{2}(m)}/\|f\|_{L^2(m)}^{2} &\ll N^{-2\alpha}\\ &+\left(N^{-\frac{1-\alpha}{d}}+N^{-\frac{1-\alpha-\epsilon}{q}}\right) \\
&+ N^{4\beta L} e^{-N^\beta}.
\end{split}
\end{align}

As $L$ is fixed, choosing $\alpha<1$ and $\beta,\varepsilon$ small enough, we deduce the estimate.
One possible choice for the parameters is to take $\alpha\approx 1/(2q+1)$, and $\beta,\varepsilon\approx 0$ in order to get any $\delta<\frac{1}{2q+1}$ in Theorem~\ref{thm:mixing-est-s-gap-free}. In particular in view of Hua's bound, we may take any $\delta<\frac{1}{2^{d}+1}$. 
\end{proof}

\begin{rem}
In the case of monomials $p(x)=x^{d}$ of degree of larger than $6$, Bourgain's improvement is superior to Hua's bound, and shows it is enough to consider $q=d(d+1)/2$ in the above proof, leading to a bound of the form $N^{-\frac{1-\alpha-\epsilon}{d(d+1)/2}}$ in~\eqref{eq:integral-first-second}, resulting in an estimate of $\frac{1}{d(d+1)+1}$ for $\delta$. 

Moreover, the above proof shows that it is enough to consider $q$ such that $p(x)$ has $2q$-moment cancellation with level strictly larger than $d-1$, and not necessarily level $d$, although such $q$ will reflect on $\alpha$.
\end{rem}

Using Theorem~\ref{thm:mixing-est-s-gap-free}, the following corollary follows from Chebychev's inequality just like in Corollary~\ref{cor:estimate-good-set} -
\begin{cor}
Let $f\in L^2_{0}(G/\Gamma)$ be a K-finite function with $\|f\|_{L^2}=~1$ then the set $B^{\gamma,C}_{N}$ which is the complement of the set of $(N,\gamma)$-Good points with constant $C$ in $X$ satisfy $$\mu\left(B^{\gamma,C}_{N}\right) \ll_{f,p} N^{2\gamma+2\epsilon-2\delta}$$ for any $\epsilon>0$, where the implied constant depends on $f$ and the polynomial $p$.
\end{cor}
Using Observation~\ref{obs:geometrical-estimate} (note that the same Lipschitz estimate for pointwise comparison of the averages along the orbits holds for the integral as well), we use Proposition~\ref{prop:seperated} (with the minor modification of changing $2\cdot C$ to $4\cdot C$) and continue to construct a packing argument as described in $\S3$ in order to have the following estimate - 
\begin{equation}\label{eq:approximation-dimension-estimate-2}
    \dim_{H}\left\{x\in X \mid \overline{\lim}\left\lvert \frac{1}{N}\sum_{n=0}^{N-1}f\left(u_{p(n)}.x\right) - \frac{1}{N}\int_{0}^{N}f(u_{p(t)}.x)dt \right\rvert > 0 \right\} \leq 3-\frac{\delta}{d},
\end{equation}
for any $\delta<\frac{1}{2q+1}$, where $p$ has $2q$-moments cancellation of level $d$, for $K$-finite function $f$.

Now assuming that $f$ is smooth, we have rapid decay of the $K$-Fourier coefficients and $\|f-~f^{\left[-L,L\right]}\|_{\infty}\to~ 0$ as $L$ tends to $\infty$.
Therefore arguing similarly to \eqref{eq:smooth-conv-estimate}, given $x\in X$ so that $\overline{\lim} \lvert B_{N}f(x) \rvert >\varepsilon$, we choose $L$ large enough so that $\|f-~f^{\left[-L,L\right]}\|_{\infty} \leq \varepsilon/3$. 
Noticing that 
\begin{equation*}
    \sup_{x\in X}\lvert B_{N}(f(x)-f^{\left[-L,L\right]}(x)) \rvert \leq 2\|f-f^{\left[-L,L\right]}\|_{\infty},
\end{equation*}
we have for that $x\in X$ that $\overline{\lim}\left\lvert B_{N}f^{\left[-L,L\right]} \right\rvert \geq \varepsilon/3$, therefore
\begin{equation}
    \mu\left\{x\in X \mid \left\lvert B_{N}f(x) \right\rvert\geq \varepsilon \right\} \leq \mu\left\{x\in X \mid \left\lvert B_{N}f^{\left[-L,L\right]}(x) \right\rvert\geq \varepsilon/3 \right\} \ll_{f,p} N^{2\epsilon-2\delta}.
\end{equation}

Continuing in similar fashion to the proof of Corollary~\ref{cor:decay-estimate} we have the following estimate 
\begin{equation}\label{eq:approximation-dimension-estimate}
    \dim_{H}\left\{x\in X \mid \overline{\lim}\left\lvert \frac{1}{N}\sum_{n=0}^{N-1}f\left(u_{p(n)}.x\right) - \frac{1}{N}\int_{0}^{N}f(u_{p(t)}.x)dt \right\rvert > 0 \right\} \leq 3-\frac{\delta}{d},
\end{equation}
for any $\delta$ which is admissible for the estimate proven in Theorem~\ref{thm:mixing-est-s-gap-free}. In particular when $p$ is a monomial $p(x)=x^{d}$ we may take $\delta<~\max\left\{\frac{1}{2^{d}+1},\frac{1}{d(d+1)+1}\right\}$.

\begin{lem}\label{lem:dani-smillie-by-parts}
Assume that $f$ is a smooth function on $X$ with $\int_{X}fd\mu~=~0$, then for any non-$U$-periodic point $x_0 \in X$ we have $\lim_{T\to 0}\frac{1}{T}\int_{0}^{T}f(u_{p(t)}.x_0)dt~=~0$.
\end{lem}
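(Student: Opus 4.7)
The plan is to reduce the polynomial-time integral to a weighted integral over the linear $U$-orbit $\{u_s.x_0\}$ and then invoke the Dani--Smillie theorem. Since $p$ is a non-constant polynomial, $p'$ has only finitely many zeros, so there is a threshold $T_0$ beyond which $p'$ has constant sign; after replacing $p(t)$ by $-p(t)$ if necessary (which merely replaces $u_{p(t)}$ by $u_{-p(t)}$, still a unipotent orbit), one may assume $p'(t)>0$ for $t\ge T_0$ and $p(t)\to\infty$. The contribution of $[0,T_0]$ to $\tfrac{1}{T}\int_0^T f(u_{p(t)}.x_0)\,dt$ is bounded by $T_0\|f\|_\infty/T=o(1)$ and is harmless.

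On $[T_0,T]$ I would change variables $s=p(t)$, writing
\[
\int_{T_0}^{T} f(u_{p(t)}.x_0)\,dt=\int_{p(T_0)}^{p(T)} f(u_s.x_0)\,g(s)\,ds,\qquad g(s):=\frac{1}{p'(p^{-1}(s))}.
\]
Since $p^{-1}(s)\sim(s/a_d)^{1/d}$ as $s\to\infty$, where $a_d$ denotes the leading coefficient of $p$, one has the asymptotics $g(s)\asymp s^{1/d-1}$ and $|g'(s)|\asymp s^{1/d-2}$, with implied constants depending only on $p$. Setting $F(S):=\int_0^S f(u_s.x_0)\,ds$ and integrating by parts gives
\[
\int_{p(T_0)}^{p(T)} f(u_s.x_0)\,g(s)\,ds = F(p(T))g(p(T)) - F(p(T_0))g(p(T_0)) - \int_{p(T_0)}^{p(T)} F(s)\,g'(s)\,ds.
\]

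The Dani--Smillie theorem \cite[Theorem~1]{dani1984}, applied to the non-$U$-periodic point $x_0$, yields that $\{u_s.x_0\}$ equidistributes with respect to $\mu$ as $s\to\infty$, so since $\int f\,d\mu=0$ one has $F(S)=o(S)$. The boundary term $F(p(T))g(p(T))$ is thus $o(p(T))\cdot O(p(T)^{1/d-1})=o(T)$ and divided by $T$ tends to $0$; the other boundary term is $O(1)$. For the integral remainder, fix $\varepsilon>0$ and choose $S_0$ so large that $|F(s)|\le\varepsilon s$ for $s\ge S_0$; then
\[
\Bigl|\int_{S_0}^{p(T)} F(s)\,g'(s)\,ds\Bigr|\lesssim\varepsilon\int_{S_0}^{p(T)} s\cdot s^{1/d-2}\,ds\lesssim\varepsilon\cdot p(T)^{1/d}\lesssim\varepsilon T,
\]
with the piece over $[p(T_0),S_0]$ bounded by a constant. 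Dividing by $T$ and letting $\varepsilon\to 0$ yields the claim.

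The main technical point is matching the decay of the weight $g$ against the sublinear growth of $F$ coming from Dani--Smillie: the weight decays at the precise rate $g(s)\asymp s^{1/d-1}$, which is exactly what makes $\int_0^{p(T)} s\,|g'(s)|\,ds$ grow like $p(T)^{1/d}\asymp T$ (not faster), so that dividing by $T$ produces a constant multiple of $\varepsilon$ and the bootstrap closes. No quantitative Dani--Smillie rate is required; qualitative equidistribution of the linear orbit is enough, which is precisely why this step of the argument in \S\ref{sec:s-gap} is spectral-gap free.
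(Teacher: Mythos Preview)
Your proof is correct and follows essentially the same approach as the paper's: change variables $s=p(t)$ to obtain a weighted integral against $g(s)\asymp s^{1/d-1}$, integrate by parts, and invoke Dani--Smillie to get $F(S)=o(S)$. If anything, your treatment of the initial interval, the sign of $p'$, and the $\varepsilon$-argument for the remainder integral is more careful than the paper's, which simply asserts the final estimate after noting $N(p(N))^{(d-1)/d}\asymp p(N)$.
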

\begin{proof}
To ease notation, we assume that the leading coefficient of $p(t)$ equals to $1$, the computation involved in the general case is similar.

Let $\varepsilon>0$ be given.
Denote $F(t)=\int_{0}^{t}f(u_{t}.x_0)dt$.

By the equidistribution theorem of Dani-Smillie~\cite{dani1993limit} for the continuous orbit at linear time, given $\varepsilon>0$, there exists $T_e=T_e(\varepsilon,f,x_0)$ such that for all $t\geq T_e$, we have $\left\lvert F(t)/t \right\rvert <\varepsilon$.
Without loss of generality, we may assume $T_e>1$.
As the polynomial $p(t)$ is fixed, there exists $T_{1}=T_{1}(p)$ such that for all $t>T_1$ we have :
\begin{equation*}
\begin{split}
0.99t^{d} &\leq p(t) \leq 1.01t^{d},\\
0.99dt^{d-1} &\leq p'(t) \leq 1.01dt^{d-1},\\
0.99d(d-1)t^{d-2} &\leq p''(t) \leq 1.01d(d-1)t^{d-2}.
\end{split}
\end{equation*}
We may assume without loss of generality that $T_{1}>1$, so in particular the function $p(t)$ is monotonically increasing for $t>T_1$.

Let $0<\delta<1$ be a small number to be determined later.
We assume that $T$ is large enough so that $\delta \cdot T>\max\{T_{1},T_{e}\}$.
We may write 
$$ \frac{1}{T}\int_{0}^{T}f\left(u_{p\left(t\right)}.x_{0}\right)dt=\frac{1}{T}\int_{0}^{\delta\cdot T}f\left(u_{p(t)}.x_0\right)dt+\frac{1}{T}\int_{\delta\cdot T}^{T}f\left(u_{p\left(t\right)}.x_{0}\right)dt $$
and so we get 
$$ \frac{1}{T}\int_{0}^{T}f\left(u_{p\left(t\right)}.x_{0}\right)dt=O_{f}\left(\delta\right)+\frac{1}{T}\int_{\delta\cdot T}^{T}f\left(u_{p\left(t\right)}.x_{0}\right)dt. $$
Substituting variables $s=p\left(t\right)$ lead to the following integral:
$$\frac{1}{T}\int_{\delta\cdot T}^{T}f\left(u_{p\left(t\right)}.x_{0}\right)dt=\frac{1}{T}\int_{p\left(\delta\cdot T\right)}^{p\left(T\right)}f\left(u_{s}.x_{0}\right)q\left(s\right)ds, $$
where $q(s)=dp^{-1}(s)/ds$.

Integration by parts give
\begin{equation}
    \begin{split}
        \frac{1}{T}\int_{p\left(\delta\cdot T\right)}^{p\left(T\right)}f\left(u_{s}.x_{0}\right)q\left(s\right)ds &= \frac{1}{T}\left[F\left(p\left(T\right)\right)\cdot q\left(p\left(T\right)\right)-F\left(p\left(\delta\cdot T\right)\right)\cdot q\left(p\left(\delta\cdot T\right)\right)\right]\\ 
        &\ -\frac{1}{T}\int_{p\left(\delta\cdot T\right)}^{p\left(T\right)}F\left(s\right)\cdot q'\left(s\right)ds.
    \end{split}
\end{equation}

In the region where we integrate, we have that $q(p(t)) = O_{p}\left(\left(\delta\cdot T\right)^{-(d-1)}\right).$
Considering the first summand, by the equidistribution theorem we get
$$ \frac{1}{T}\left\lvert F(p(t))\cdot q(p(t)) \right\rvert = O_{p}\left(\varepsilon\cdot \delta^{-(d-1)} \right). $$

For the second summand, applying the mean value theorem we get
$$ \frac{1}{T}\int_{p\left(\delta\cdot T\right)}^{p\left(T\right)}F\left(s\right)\cdot q'\left(s\right)ds= \frac{p\left(T\right)-p\left(\delta\cdot T\right)}{T}\cdot F\left(p\left(t\right)\right)\cdot q'\left(p\left(t\right)\right)$$  
for some $\delta\cdot T<t<T$.

Using the inverse function theorem, we may differentiate $q$ as follows
\begin{equation*}
    q'(p(t))=-\frac{p''(t)}{\left(p'(t)\right)^3}.
\end{equation*}
As $t>\delta\cdot~T>~T_1$ we deduce that $$q'\left(p\left(t\right)\right)=O_{p}\left(\frac{t^{d-2}}{t^{3\left(d-1\right)}}\right)=O_{p}\left(\left(\delta\cdot T \right)^{-2d+1}\right).$$ 
Hence we get $$F\left(p\left(t\right)\right)\cdot q'\left(t\right)=O_{p}\left(\varepsilon\cdot T^{-d+1} \cdot \delta^{-2d+1}\right).$$
Moreover, we have that $$\frac{p(T)-p(\delta\cdot T)}{T} = O_{p}\left(T^{d-1}\right).$$
So for the second summand we have the estimate
\begin{equation*}
\begin{split}
    \frac{p\left(T\right)-p\left(\delta\cdot T\right)}{T}\cdot F\left(p\left(t\right)\right)\cdot q'\left(p\left(t\right)\right) &= O_{p}\left(T^{d-1}\right) \cdot O_{p}\left(\varepsilon \cdot T^{-d+1}\cdot \delta^{-2d+1} \right) \\
    &= O_{p}\left(\varepsilon\cdot \delta^{-2d+1}\right).
\end{split}
\end{equation*}

Collecting the bounds we get
\begin{equation*}
    \frac{1}{T}\int_{0}^{T}f(u_{p(t)}.x_0)dt = O_{f}\left(\delta\right)+O_{p}\left(\varepsilon\cdot \delta^{-(d-1)}\right) + O_{p}\left(\varepsilon \cdot \delta^{-2d+1} \right),
\end{equation*}
for all $T$ such that $\delta\cdot T>\max\{T_{e},T_{1} \}$.
Hence by choosing $\delta=\varepsilon^{1/2d}$, we get 
$$ \frac{1}{T}\int_{0}^{T}f\left(u_{p(t)}.x_0\right)dt = O_{f,p}\left(\varepsilon^{1/2d}\right), $$
for all $T>\varepsilon^{-1/2d}\cdot\max\{T_e, T_1\}$.
\end{proof}

By the previous Lemma, we see that \emph{except for $U$-periodic points}, $\frac{1}{N}\int_{0}^{N}f(u_{p(t)}.x)dt\to~0$ as $N\to\infty$, hence by the inequality -
\begin{equation*}
    \begin{split}
        \left\lvert \frac{1}{N}\sum_{n=0}^{N-1}f\left(u_{p(n)}.x\right) \right\rvert &\leq \left\lvert \frac{1}{N}\sum_{n=0}^{N-1}f\left(u_{p(n)}.x\right) - \frac{1}{N}\int_{0}^{N}f(u_{p(t)}.x)dt \right\rvert\\
        &+ \left\lvert \frac{1}{N}\int_{0}^{N}f(u_{p(t)}.x)dt \right\rvert,
    \end{split}
\end{equation*}
we deduce the following theorem:
\begin{thm}\label{thm:s-gap-free-estimate}
\begin{equation}\label{eq:s-gap-free-estimate}
\begin{split}
    \dim_{H}\left(\left\{x\in Y \mid \overline{\lim}\left\lvert \frac{1}{N}\sum_{n=0}^{N-1}f\left(u_{p(n)}.x\right)\right\rvert > 0 \right\}\right) &= \dim_{H}\left(\left\{x\in Y \mid \overline{\lim}\left\lvert B_{N}f(x)\right\rvert > 0\right\} \right) \\ &\leq 3-\frac{\delta}{d},
\end{split}
\end{equation}
where $Y\subset X$ stands for the set of $U$-generic points, for all $\delta$ which are admissible for the estimate of Theorem~\ref{thm:mixing-est-s-gap-free}.
\end{thm}
We may write the exceptional set $E$ as $E=(E\cap Y)~\cup~ (E\cap ~Y^{c})$, where
$Y^{c}$ consists of the non-$U$-generic points (if those even exists, namely the lattice $\Gamma$ is non-uniform), which in the case of $SL_{2}$ are $U$-periodic points which form a finite ensemble of \emph{two-dimensional ''tubes''}, hence $\dim_{H}(E\cap~Y^{c}) \leq ~\dim_{H}(Y^{c})=~2$.
For a point $x\in E\cap Y$, we have that $\frac{1}{N}\int_{0}^{N}f(u_{p(t)}.x)dt \to~0$, hence we can deduce the following bound $\dim_{H}(E\cap Y)\leq 3-\frac{\delta}{d}$, using a union bound for the Hausdorff dimension, we deduce the following bound for the Hausdorff dimension of the whole exceptional set 
$$\dim_{H}(E) \leq \max\left\{3-\frac{\delta}{d},2\right\}.$$
In particular, for $p(n)=n^2$, we have $3-\frac{1}{10}=2.9$ as an upper bound for the exceptional set of convergence of square averages,  for any lattice $\Gamma \leq SL_{2}(\mathbb{R})$.

\section{Bounding exceptional sets for unipotent flows in general Lie groups}\label{sec:higher-dimensions}
In this section we show how to get a general bound for the dimension of the exceptional set of one-parameter unipotent flows in general Lie groups, based on the results of the prior sections.

Let $G$ be a real semi-simple linear Lie group, let $\Gamma \subset G$ be a \emph{lattice} in $G$, and let $U=\left\{u_{t} \right\}$ be a one-parameter unipotent group where $u_{t}=exp(t\cdot N)$ for some nilpotent element $N \in Lie(G)$.

Let $p \in \mathbb{Z}[x]$ be a polynomial of degree $d$, and for every continuous function $f$ with compact support on $G/\Gamma$, denote by $A^{\text{poly}}_{N}$ the following averaging operator - 
\begin{equation*}
    A^{\text{poly}}_{N}f(x_0)=\frac{1}{N}\sum_{n=0}^{N-1}f\left(u_{p(n)}.x_0\right).
\end{equation*}

By the Jacobson-Morozov theorem (see \cite{knapp2013lie}, Theorem~$10.3$), we can complete $N$ into a $\mathfrak{sl_{2}}$-triplet, denote the subgroup generated by this $\mathfrak{sl_{2}}$-triplet under the exponential map by $L$ which satisfies either $L\simeq~SL_{2}(\mathbb{R})$ or $L\simeq~PGL_{2}(\mathbb{R})$.

Let $(\rho,\mathcal{H})$ be a unitary representation of $G$. We can restrict $\rho$ to $L$ and get a unitary representation of $L$, $\rho\mid_{L}$. As $L$ is semi-simple, we can write $\mathcal{H}$ as a direct integral over irreducible unitary $L$-representations $\{V_{s}\}$ as follows - $\mathcal{H}=\int^{\oplus}V_{s}d\mu(s)$ for a suitable spectral measure $\mu$.
We have the following Parseval-type formula for a vector $v\in \mathcal{H}$ - 
\begin{equation*}
    \|v\|_{\mathcal{H}}^{2}=\int \|\pi_{s}(v)\|_{V_s}^{2}d\mu(s),
\end{equation*}
where $\pi_{s}:\mathcal{H}\to V_{s}$ is the associated projection operator.
As each $V_{s}$ is $L$-invariant, we can decompose the operator $A_{N}$ across the irreducible $L$-constitutes we have
\begin{equation}\label{eq:general-parsavel}
    \|A^{\text{poly}}_{N}v\|_{\mathcal{H}}^{2} = \int \|A^{\text{poly}}_{N}\pi_{s}(v)\|_{V_s}^{2} dm_{L}(s),
\end{equation}
where $dm_{L}$ is the associated spectral measure.
Using Lemma~\ref{lem:quant-mixing-horocycle}, we have that for each constituent in the decomposition and a $K$-finite vector $v$ - 
\begin{equation*}
    \|A^{\text{poly}}_{N}\pi_{s}(v)\|_{V_s}^{2} \leq N^{-2s'}\|\pi_{s}(v)\|_{V_s}^{2},
\end{equation*}
for some $s'$ which satisfy $s' \leq \min\left\{\frac{1}{2},(d-1)s_{1}\right\}$, where $s_{1}$ is the spectral gap of $V_{s}$ as $L$-representation, such a bound follows from the Harish-Chandra bound (\cite[Lemma $9.1$]{venkatesh2010sparse}).
Hence 
\begin{equation}
\begin{split}
    \|A^{\text{poly}}_{N}v\|_{\mathcal{H}}^{2} &\leq \int N^{-2s'}\|\pi_{s}(v)\|_{V_s}^{2}dm_{L}(s) \\
    &\leq N^{-2s'}\int\|\pi_{s}(v)\|_{V_s}^{2}dm_{L}(s) \\
    &= N^{-2s'}\|v\|_{\mathcal{H}}^{2}.
\end{split}
\end{equation}

We say that $u_{t}$ is of \emph{degree $\ell$} if every polynomial entry in variable $t$ of the matrix $u_{t}$ is a polynomial of degree less or equal to $\ell$  and $\ell$ is the minimal natural number with that property. 

A computation analogous to the computation done in Observation~\ref{obs:geometrical-estimate}, shows that for every matrix $h$, the entries of $ad_{u_{t}}(h)$ are polynomials of degree at-most $2\cdot \ell$, hence we can conclude the following observation.
\begin{obs}
Assume that $x,y \in G/\Gamma$  such that $x$ is a $(N,\gamma)$-Good point with a constant $C$, and $d(x,y) \leq N^{-2\ell\cdot d-\gamma}$ ,then $y$ is $(N,\gamma)$-Good point with a constant $C'$ for $C'=C'(C,f,p)$ as in Observation~\ref{obs:geometrical-estimate}.
\end{obs}
Continuing in a similar manner to the one described in $\S\ref{sec:h-dim}$, one concludes an upper bound for the dimension of the exceptional set of form $\dim(G)-\frac{s}{d\cdot\ell}$, where $s=s(G,\Gamma)$ is related to the spectral gap of the representation of $G$ on $L^{2}_{0}(G/\Gamma)$.

Now, building upon the results of $\S\ref{sec:s-gap}$, we are going to remove the dependence on the spectral gap.
We say that a point $x\in X$ is \emph{$U$-generic} for a one-parameter subgroup $U=\{u_{t}\} \leq G$ if for every $f\in~C_{c}(X)$ we have $\lim_{t\to\infty}\frac{1}{T}\int_{0}^{T}f(u_{t}.x)dt = \int_{X}fd\mu(X)$ where $\mu$ is the unique probability measure on $X$ induced from the Haar measure on $G$.
Define $I_{N}$ to be the following operator: $$I_{N}f(x)=\frac{1}{N}\int_{t=0}^{N}f(u_{p(t)}.x)dt.$$
We clearly have the following inequality
\begin{equation*}
    \overline{\lim} \|A^{\text{poly}}_{N}f\|_{L^{2}(m)} \leq \overline{\lim} \|A^{\text{poly}}_{N}f-I_{N}f\|_{L^{2}(m)} + \overline{\lim} \|I_{N}f\|_{L^{2}(m)}.
\end{equation*}
Similar to Lemma~\ref{lem:dani-smillie-by-parts}, we conclude that $I_{N}f(x)$ tend to $0$ as $N$ tends to infinity, for every $U$-generic point $x$.
\begin{lem}\label{lem:ratner-countable}
The non-$U$-generic points $x\in G/\Gamma$ are contained inside a countable union of varieties of co-dimension $1$, and in-particular \begin{equation*}
\dim_{H} \left\{x\in G/\Gamma \mid x \text{ is not }U\text{-generic} \right\} \leq \dim(G)-1.
\end{equation*}
\end{lem}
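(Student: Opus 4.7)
The plan is to deduce the lemma from the combination of Ratner's orbit closure theorem and the countability of homogeneous subvarieties containing a prescribed unipotent orbit. First, I would invoke Ratner's orbit closure theorem to conclude that for every $x \in G/\Gamma$, the orbit closure $\overline{Ux}$ has the form $Hx$ for some closed connected subgroup $H$ with $U \subseteq H$ such that the stabilizer of $x$ intersected with $H$ is a lattice in $H$. Combined with Ratner's uniform distribution theorem for unipotent flows, a point $x$ is $U$-generic precisely when $H = G$, so the non-$U$-generic set is contained in the union of all proper periodic homogeneous subvarieties $Hx$ with $U \subseteq H$ and $\dim(H) < \dim(G)$.

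Next I would import the standard countability result, due to Ratner and reformulated by Dani--Margulis and Mozes--Shah: the collection $\mathcal{H}(U,\Gamma)$ of closed connected subgroups $H \leq G$ containing $U$ and admitting some periodic $H$-orbit in $G/\Gamma$ is countable modulo conjugation. Furthermore, for each such $H \in \mathcal{H}(U,\Gamma)$, the family of distinct periodic $H$-orbits in $G/\Gamma$ is itself countable (parametrized via $N_G(H)/H$ and the commensurator of $\Gamma$). Hence the collection of all proper periodic subvarieties $Hx$ that can arise as $\overline{Ux}$ for some non-$U$-generic $x$ is countable.

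Finally, each periodic orbit $Hx$ is a closed immersed submanifold of $G/\Gamma$ of dimension $\dim(H) \leq \dim(G) - 1$, so its Hausdorff dimension is at most $\dim(G) - 1$. Using the countable stability property of Hausdorff dimension listed in $\S\ref{sec:h-dim}$, the countable union of these subvarieties has Hausdorff dimension at most $\dim(G)-1$, which proves the claim.

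The main obstacle is a bookkeeping one rather than a conceptual one: one must confirm that the version of the countability statement applied here genuinely enumerates all subgroups $H$ that can arise from orbit closures of the one-parameter unipotent flow $\{u_t\}$ through points of $G/\Gamma$, with the additional constraint $U \subseteq H$. This is handled by the Mozes--Shah linearization machinery together with the standard observation that requiring $U \subseteq H$ only restricts, rather than enlarges, the collection of admissible $H$, so countability is preserved.
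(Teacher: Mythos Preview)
Your overall strategy is sound, but the second countability claim is false as stated: for a fixed proper $H \supset U$, the family of closed $H$-orbits in $G/\Gamma$ need not be countable. Already for $G = SL_2(\mathbb{R})$, $\Gamma = SL_2(\mathbb{Z})$ and $H = U$ itself, the closed horocycles form a continuous one-parameter family (indexed, say, by length), so there are uncountably many periodic $U$-orbits; each is a genuine Ratner orbit closure for some non-generic point. Your parenthetical ``parametrized via $N_G(H)/H$ and the commensurator of $\Gamma$'' does not help, since $N_G(H)/H$ is typically a positive-dimensional Lie group rather than a countable set. Thus the non-generic locus is \emph{not} naturally a countable union of individual closed $H$-orbits.

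The paper circumvents this by packaging all of the (possibly uncountably many) closed $H$-orbits attached to a single $H$ into one algebraic set. Following the Dani--Margulis formalism, for each $H$ in the genuinely countable Ratner collection $\mathcal{H}$ one sets $N(H,U)=\{g\in G : U\subset gHg^{-1}\}$, and observes that the non-$U$-generic locus is contained in $\bigcup_{H\in\mathcal{H},\,H\neq G}\pi(N(H,U))$. Each $N(H,U)$ is a proper real-algebraic subvariety of $G$ (for $G$ simple, $N(H,U)=G$ would force the normal closure of $U$, namely $G$, into $H$), hence has codimension at least one; since $\pi$ is $1$-Lipschitz it cannot raise Hausdorff dimension, and countable stability finishes the argument. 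Your proof becomes correct once you replace ``countably many periodic $H$-orbits'' by ``countably many tubes $\pi(N(H,U))$'', which is exactly the substitution the paper makes.
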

\begin{proof}
    Denote by $\mathcal{H}$ the collection of all closed connected subgroups $H$ of $G$ such that $H\cap \Gamma$ is a lattice in $H$ and the subgroup $S$ generated by all the unipotent one-parameter subgroups of $G$ contained in $H$ acts ergodically on $H\Gamma / \Gamma$ with respect to the $H$-invariant probability measure (i.e. $S=\text{Stab}_{G}(\mu)$ for some homogeneous $u_{t}$-invariant and ergodic measure $\mu$).
    By \cite[Corollary $A.(2)$]{ratner1991raghunathan} or  \cite[Proposition $2.1$]{dani1993limit}, there exists only countably many subgroups $H$ in $\mathcal{H}$.

    Let $W$ be a subgroup of $G$ generated by one-parameter unipotent subgroups of $G$ which are contained inside $W$.
    For $H\in \mathcal{H}$ we define the following sets:
    \begin{align*}
        N(H,W) &= \left\{g\in G\mid W\subset gHg^{-1} \right\},\\
        S(H,W) &= \bigcup_{H'\in\mathcal{H}, H'\subset H, H'\neq H} N(H',W),
    \end{align*}
    and we define the ''tube'' with respect to a subgroup $W$ as follows:
    \begin{equation*}
        T_{H}(W)=\pi(N(H,W))\setminus\pi(S(H,W)),
    \end{equation*}
    where $\pi:G\to G/\Gamma$ is the natural projection map.
    For any $g\Gamma \in G/\Gamma$ such that $g\Gamma \in T_{H}(W)$, the subgroup $gHg^{-1}$ equals to the stability group of the homogeneous measure which is supported on the orbit $\overline{W.(g\Gamma)}\subset G/\Gamma$.
    Specializing to $W=U$, the set of \emph{$U$-generic} points in $G/\Gamma$ is equal to $T_{G}(U)$.
    The singular set composed of \emph{non-$U$-generic} points is contained in  $\pi(S(G,U))$, which can be written as a countable union of sets of the form $\pi(N(H,U))$ for every $H\in \mathcal{H}$ different than $G$.
    The set $N(H,U)$ for any fixed $H\in\mathcal{H}$ is an \emph{analytic variety} and in particular, it is of dimension smaller than $\dim(G)$ unless it is equal to $G$.
    As the projection map from the group to the homogeneous space $\pi:G\to G/\Gamma$ is $1$-Lipschitz map, the Hausdorff dimensions of the various embeddings $\pi(N(H,U))$ cannot increase, thus bounding the Hausdorff dimension of each such ``tube'' $\pi(N(H,U))$.
    Therefore, the non-$U$-generic points are contained in a countable union of lower-dimensional varieties, and by the union property of the Hausdorff dimension, we deduce the result.
\end{proof}
Moreover, as we can compute $\|A^{\text{poly}}_{N}\pi_{V_{s}}f-I_{N}\pi_{V_{s}}f\|$ separately for every $L$-representation $V_{s}$ which is contained inside $L^{2}_{0}(G/\Gamma)$, we are essentially in the settings of Theorem~\ref{thm:mixing-est-s-gap-free}, with a sampling along a polynomial of degree $\ell$ from the unipotent flow in $L$, hence we have the following estimate - $\|A_{N}f-I_{N}f\|_{V_s} \leq~ N^{-c}\|f\|_{V_{s}}$ where $c=c(p)$ independent of $s$.
Using \eqref{eq:general-parsavel} we have - 
\begin{equation*}
    \|A^{\text{poly}}_{N}f-I_{N}f\|_{L^{2}(m)} \leq N^{-c}\|f\|_{L^{2}(m)},
\end{equation*}
for every $K$-finite function $f$ of vanishing integral.
We will use the following standard lemma:
\begin{lem}
Consider $C^{\infty}_{c}\left(X\right)$ as a $G$-representation. There exists a separable family of $K$-finite functions in $C^{\infty}_{c}\left(X\right)$. 
\end{lem}
For a proof, one may consult \cite[Section~\S6]{harish-chandra1966}.

We may represent the set of points $x\in G/\Gamma$ such that the samples $\left\{u_{p(n)}.x\right\}$ do not equidistribute as a countable union of exceptional sets with respect to functions in a separable family which can be obtained from the lemma.

Continuing in an analogues manner to Section $\S\ref{sec:s-gap}$ and the proof of  Theorem~\ref{thm:s-gap-free-estimate}, we deduce that
\begin{equation*}
    \dim_{H}\left\{x\in G/\Gamma \mid \left\{u_{p(n)}.x\right\}_{n=1}^{\infty} \text{ is not equidistributed} \right\} \leq \max\left\{\dim(G)-\frac{c}{d\cdot\ell},D_{T}\right\},
\end{equation*}
where $D_{T}=\dim_{H}(\pi\left(S\left(G,U\right)\right))$ is the dimension of the tube of the non-$U$-generic points, and in-particular $D_{T}\leq \dim(G)-1$.
Moreover, as $\ell$ is bounded (depending on $G$), we can make this estimate uniform over the one-parameter unipotent subgroups of $G$.

\bibliographystyle{plain}
\bibliography{bib-squares}

\end{document}